\let\SavedRightarrow=\Rightarrow
\let\Rightarrow=\SavedRightarrow
\newenvironment{itemizz}{\begin{itemize}\setlength{\itemsep}{-1mm}}
{\end{itemize}}
\newtheorem{theorem}{Theorem}[section]
\newtheorem{definition}[theorem]{Definition}
\newtheorem{lemma}[theorem]{Lemma}
\newtheorem{corollary}[theorem]{Corollary}
\newcommand\RRR{\mathbb {R}}
\newcommand\PPP{\mathbb {P}}
\newcommand\QQQ{\mathbb {Q}}
\newcommand\DD{\mathcal {D}}
\newcommand\FF{\mathcal {F}}
\newcommand\AP{\mathcal{AP}}
\newcommand\dom{\mathrm{dom}}  
\newcommand\ran{\mathrm{ran}}   
\newcommand\cl{\mathrm{cl}}  
\newcommand\lh{\mathrm{lh}}   
\newcommand\MA{\mathrm{MA}}  
\newcommand\PFA{\mathrm{PFA}}  
\newcommand\ZFC{\mathrm{ZFC}}  
\newcommand\CH{\mathrm{CH}}  
\newcommand\hgt{\mathrm{ht}}   
\newcommand\AV{\mathsf{A\! V}} 
\newcommand\cchi{{\raise 2 pt \hbox{$\chi$}}}
\newcommand\res{\mathord {\upharpoonright}}  
\newcommand\cat{^{\mathord{\frown}}}  
\newcommand\iv{^{-1}} 
\newcommand\one{\mathbbm{1}} 
\newcommand\V{{\mathbf V}}  
\newcommand\compat{\mathrel{\not\perp}}  
\newcommand\pre[2]{ {}^{#1} #2 }
\newcommand\stp{ {}^{*}\!p }
\newcommand\eop{{\Large \Coffeecup}}  
\newenvironment{proof}{{\bf Proof.}}{\eop\medskip}
\newenvironment{proofof}[1]{\medskip \textbf{Proof of #1.}}{\eop\medskip}
\begin{document}

\title{Forcing and Differentiable Functions%
\footnote{
2010 Mathematics Subject Classification:
Primary  03E35, 03E50.
Key Words and Phrases: 
PFA, MA, $\aleph_1$-dense, curve.
}}

\author{Kenneth Kunen\footnote{University of Wisconsin,  Madison, WI  53706, U.S.A.,
\ \ kunen@math.wisc.edu} }

\maketitle

\begin{abstract}
We consider covering $\aleph_1 \times \aleph_1$ rectangles
by countably many smooth curves, and differentiable isomorphisms
between $\aleph_1$-dense sets of reals.
\end{abstract}

\section{Introduction}
\label{sec-intro}
In this paper, we consider two different issues, both related to the
question of obtaining differentiable real--valued functions
where classical results only produced functions or continuous functions.

Regarding the first issue,
the text of Sierpi\'nski \cite{Sier} shows that $\CH$
is equivalent to his Proposition $P_2$, which is
the statement that the plane
``est une somme d'une infinit\'e d\'enombrable
de courbes''.    Here, a ``curve'' is just the graph of 
a function or an inverse function, so $P_2$
says only that $\RRR^2 = \bigcup_{i \in \omega} (f_i \cup f_i\iv)$,
where each $f_i$ is (the graph of) a function from $\RRR$ to $\RRR$,
with no assumption of continuity.  The proof 
actually shows, in $\ZFC$, that for every $E \in [\RRR]^{\aleph_1}$, there 
are $f_i : \RRR \to \RRR$ with
$E^2 \subseteq \bigcup_{i \in \omega} (f_i \cup f_i\iv)$,
and that this is false for all $E$ of size greater than $\aleph_1$.

Usually in geometry and analysis, ``curve'' \emph{does} imply continuity,
so it is natural to ask whether
the $f_i$ can all be continuous, or even $C^\infty$:

\begin{definition}
For $n \in \omega \cup \{\infty\}$, call
$E \subseteq \RRR$ \emph{$n$--small} iff there are $C^n$ functions
$f_i: \RRR \to \RRR$ such that
$E^2 \subseteq \bigcup_{i \in \omega} (f_i \cup f_i\iv)$.
Here, $C^0$ just means ``continuous'', and 
$C^\infty$ means $C^n$ for all $n \in \omega$.
\end{definition}

Countable sets are trivially $\infty$--small,
and by Sierpi\'nski, $|E| \le \aleph_1$ for every $0$--small set $E$,
so we are only interested in sets of size $\aleph_1$.
Every $0$--small set is of first category
and measure $0$ (and perfectly meager and universally null).  
Just in $\ZFC$, we shall prove the following in Section \ref{sec-smallness}:

\begin{theorem}
\label{thm-small}
There is an $E \in [\RRR]^{\aleph_1}$ which is $\infty$--small.
\end{theorem}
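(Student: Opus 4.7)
The plan is to mimic Sierpi\'nski's $\ZFC$ construction---which produces arbitrary (not necessarily continuous) $f_i$'s---and arrange all the choices so that the $f_i$'s can be taken to be $C^\infty$.  Sierpi\'nski's scheme, applied to any enumeration $E = \{x_\beta : \beta < \omega_1\}$ equipped with injections $\pi_\beta : \beta \to \omega \setminus \{0\}$, sets $f_0 = \mathrm{id}$ and, for $n \ge 1$, defines $f_n(x_\beta) = x_{\pi_\beta\iv(n)}$ whenever $n \in \ran(\pi_\beta)$ (and leaves $f_n(x_\beta)$ free otherwise).  Then $f_{\pi_\beta(\alpha)}(x_\beta) = x_\alpha$ for every $\alpha < \beta$, so $(x_\alpha, x_\beta) \in f_{\pi_\beta(\alpha)}\iv$, and the diagonal is covered by $f_0$.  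The only remaining issue is to extend each $f_n$ to a $C^\infty$ function on $\RRR$.

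My plan is a recursion of length $\omega_1$ in which I choose $x_\beta$ and $\pi_\beta$ simultaneously, working inside a pre-chosen thin perfect set $K \subseteq \RRR$---concretely, a Cantor set whose level-$n$ gaps decay super-polynomially.  Requiring $E \subseteq K$ and prescribing all derivatives of every $f_n$ to vanish at each point of $K$, Whitney's extension theorem will deliver a $C^\infty$ extension of each $f_n$ to $\RRR$ provided the prescribed function values satisfy $|f_n(x) - f_n(y)| = O(|x-y|^N)$ for every $N$ as $x,y$ approach a common limit point in $K$.  Because of the super-polynomial thinness of $K$, this estimate is easy to meet, as long as points in $K$ that are close in the real line are assigned close $f_n$-values.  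Accordingly, at stage $\beta$ I pick $x_\beta$ generically in $K \setminus E_\beta$ and choose $\pi_\beta$ so that, for each $n$, the forced value $f_n(x_\beta) = x_{\pi_\beta\iv(n)}$ is as close as possible (within the available slack) to the values $f_n$ has already been given at points of $E_\beta \cap K$ near $x_\beta$.

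The main obstacle is precisely this coordination.  The map $\alpha \mapsto \pi_\beta(\alpha)$ is free up to injectivity, giving me flexibility over which previously-chosen $x_\alpha$ is routed to which $n$; the question is whether this flexibility suffices to maintain Whitney-compatibility of every $f_n$ throughout all $\aleph_1$ stages, since each $f_n$'s prescribed domain inevitably accumulates inside $K$.  I expect the bulk of the proof to be a careful bookkeeping argument---enumerating, at stage $\beta$, the finitely many $n$'s whose Whitney jet is at stake (those with a previously-assigned $x_\alpha$ close to the chosen $x_\beta$) and using the freedom in $\pi_\beta$ to route each of them to a suitable nearby element of $E_\beta$---together with the verification that the super-polynomial thinness of $K$ absorbs the remaining error and makes the global Whitney estimate hold for each $f_n$ at the end of the recursion.
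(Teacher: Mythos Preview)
Your proposal has a real gap at its core. You assert that at stage $\beta$ only ``finitely many $n$'s'' have their Whitney jet at stake, but this is false once $\beta \ge \omega$. If you place $x_\beta$ near some earlier $x_\gamma$ with $\gamma$ infinite, then $x_\gamma \in \dom(f_n)$ for every $n \in \ran(\pi_\gamma)$, and $|\ran(\pi_\gamma)| = |\gamma| = \aleph_0$. So you face infinitely many simultaneous constraints: for each such $n$ that you also put into $\ran(\pi_\beta)$, the value $f_n(x_\beta) = x_{\pi_\beta^{-1}(n)}$ must be an element of $E_\beta$ within $O(|x_\beta - x_\gamma|^N)$ of $f_n(x_\gamma)$, and these elements must be \emph{pairwise distinct} because $\pi_\beta$ is injective. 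Your sketch gives no mechanism for this; the phrase ``routing each of them to a suitable nearby element of $E_\beta$'' does not explain how to find infinitely many distinct members of the countable set $E_\beta$, each near a prescribed target, while still hitting every $\alpha < \beta$ exactly once. The natural repair---set $\pi_\beta \supseteq \pi_\gamma$ so that $f_n(x_\beta) = f_n(x_\gamma)$ exactly---breaks down as soon as $x_\beta$ has two nearby predecessors $x_\gamma, x_\delta$ with incompatible $\pi_\gamma, \pi_\delta$, and such collisions are unavoidable once $E_\beta$ accumulates in the compact set $K$.

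The paper avoids this entire coordination problem by reversing the order of construction. It first writes down, explicitly and independently of any $E$, a countable family of maps $f_i^s$ on a fixed Cantor set $H$, built from a block-coding scheme (with a two-block rightward shift) that forces each $f_i^s$ to be \emph{flat}: $|f_i^s(u) - f_i^s(t)| \le M_q |u-t|^q$ for every $q$. Flatness immediately yields $C^\infty$ extensions to $\RRR$. Only afterwards does the paper run a Sierpi\'nski-style recursion, inside a carefully chosen Borel subset $T \subset H$ on which every countable sequence is coded by some single $x \in T$ via the $f_i^s$. Because the functions are fixed and uniformly flat before any points of $E$ are chosen, no per-stage Whitney bookkeeping is needed at all; the recursion merely selects points.
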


The existence of a $0$--small set is due to
Kubi\'s and Vejnar \cite{Kubis}.

But now we can ask whether \emph{every}
$E \in [\RRR]^{\aleph_1}$ is $n$--small for some $n$.
Even when $n = 0$, this would imply that every such $E$ is
of first category
and measure $0$ (and perfectly meager and universally null),
which is a well-known consequence of $\MA(\aleph_1)$.
In fact, the following theorem follows easily from results
already in the literature, as we shall point out in 
Section \ref{sec-smallness}:

\begin{theorem}
\label{thm-all-small}
{\qquad  } 
\begin{itemizz}
\item[1.] $\MA(\aleph_1)$ implies that every set of size 
$\aleph_1$ is $0$--small.  
\item[2.] $\PFA$ implies that every set of size 
$\aleph_1$ is $1$--small.
\item[3.] $\MA(\aleph_1)$ does not imply that every set of size 
$\aleph_1$ is $1$--small.
\item[4.] In $\ZFC$, 
there is an $E \in [\RRR]^{\aleph_1}$ which is not $2$--small.
\end{itemizz}
\end{theorem}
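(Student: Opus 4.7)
My plan treats parts 1 and 2 as positive forcing consequences and parts 3 and 4 as combinatorial witnesses to the impossibility of smoother covers.

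For parts 1 and 2, the plan is to apply the standard $\MA$/$\PFA$ covering result. Specifically, under $\MA(\aleph_1)$ every $\aleph_1$-sized $X \subseteq \RRR^2$ is contained in $\bigcup_n (g_n \cup g_n\iv)$ for continuous $g_n : \RRR \to \RRR$; the witnessing ccc poset has as conditions finite piecewise-linear graphs together with a finite list of pairs from a prescribed $\aleph_1$-sized set to be covered either directly or as inverses. Setting $X = E^2$ gives $0$-smallness. Part 2 replaces piecewise-linear by piecewise-cubic Hermite interpolation to handle $C^1$: conditions carry both prescribed values and prescribed slopes at finitely many points. The resulting poset is proper but not ccc, so $\PFA$ is required to obtain the analogous $\omega$-sequence of $C^1$ covering functions.

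For part 3, the plan is a two-stage construction: locate or add a specific $E_0 \in [\RRR]^{\aleph_1}$ in a ground model with a $C^1$-obstruction --- for instance a Luzin-like set whose self-similar construction forces any purported $C^1$-cover of $E_0^2$ into contradictory slope conditions at a condensation point, while merely continuous covers remain possible --- and then iterate the standard ccc posets to force $\MA(\aleph_1)$ while preserving the obstruction. The main difficulty here is preservation, since $\MA(\aleph_1)$ typically smooths out pathologies; I expect to handle it by arranging the obstruction to be coded by countably many ground-model reals, so that it is witnessed by a countable configuration that remains visible in any ccc extension.

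For part 4, I propose a $\ZFC$ transfinite recursion of length $\omega_1$ constructing $E = \{e_\alpha : \alpha < \omega_1\}$. The key $C^2$-specific ingredient is the bounded second-divided-difference identity: for $f \in C^2$ and any triple $x_1 < x_2 < x_3$ in a bounded interval, the divided difference $[x_1,x_2,x_3;f]$ equals $\tfrac{1}{2}f''(\xi)$ for some $\xi$, hence is controlled by $\tfrac{1}{2}\sup|f''|$ over that interval, and this kind of rigidity is genuinely absent at the $C^1$ level. Using it, one chooses each $e_\alpha$ so that the triples it forms with previously chosen points create divided-difference values that would need to be unbounded for any prospective $C^2$-cover. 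The hardest step, and what I would focus my attention on, is the ``for every countable family'' quantifier: since there can be more than $\aleph_1$ countable families of $C^2$ functions, a literal diagonalization in $\omega_1$ stages is unavailable. Instead, one must arrange $E$ to carry a single combinatorial pattern --- built from nested condensation points with carefully prescribed rates of approach --- that is uniformly incompatible with \emph{every} countable family of $C^2$ graphs and their inverses. Verifying that such a pattern exists and can be realized in $\ZFC$ is the crux of the argument and the place I expect the real work to lie.
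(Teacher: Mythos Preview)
Your plan is broadly sensible but diverges from the paper's treatment, which is deliberately brief: the theorem is flagged in the introduction as following ``easily from results already in the literature,'' and the proof is essentially four citations.

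For (1), the paper does not force a cover of $E^2$ directly. Instead it first applies Sierpi\'nski's $\ZFC$ decomposition $E^2 \subseteq \bigcup_i (f_i \cup f_i\iv)$ with each $f_i$ a (discontinuous) function of size $\aleph_1$, and then uses the standard $\MA(\aleph_1)$ argument to cover each such $f_i$ by countably many Cantor sets that are themselves graphs; these extend to continuous functions on $\RRR$. This two-step avoids having to decide, inside the forcing, whether a given pair goes into a $g_n$ or a $g_n\iv$. Your one-step poset may well be ccc, but you have not said why, and the bookkeeping is less clean.

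For (2), (3), (4) the paper simply cites Hart--Kunen \cite{HK} and \cite{Kun}: under $\PFA$ every $\aleph_1$-subset of $\RRR^2$ lies in countably many $C^1$ arcs; in $\ZFC$ there is an $\aleph_1$-subset of $\RRR^2$ not covered by countably many $C^2$ arcs; and consistently with $\MA(\aleph_1)$ there is a \emph{weakly Luzin} $A \in [\RRR^2]^{\aleph_1}$, which cannot be covered by countably many $C^1$ arcs. One then takes $E$ so that $A \subseteq E \times E$ (for (3),(4)) or sets $A = E \times E$ (for (2)). Your sketch for (3) is the right shape --- a Luzin-type obstruction preserved under a ccc iteration --- and this is exactly what the weakly Luzin construction in \cite{Kun} delivers.

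The real gap is in (4), and you have located it yourself. The way out of the ``too many countable families'' problem is not to diagonalize against families at all. One instead constructs a set $A$ (in $\RRR^2$, as \cite{HK} does) with the \emph{uniform} property that every single $C^2$ arc meets $A$ in at most countably many points; then automatically no countable union of such arcs covers $A$. This is achieved by building $A$ inside a Cantor-type set engineered so that second divided differences of nearby triples are unbounded --- precisely your ``nested condensation points with prescribed rates of approach,'' but used as a blanket geometric obstruction rather than as a stage-by-stage diagonalization. Once you reframe the target this way, the $\omega_1$-length recursion is no longer fighting $2^{\aleph_0}$-many families, and the construction goes through in $\ZFC$. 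Pulling back to $E$ is then trivial: let $E$ be any $\aleph_1$-set containing all coordinates of points of $A$.
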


We remark that Sierpi\'nski's use of ``curve'' is unusual in
another way:  Usually, we would call a subset of $\RRR^2$
a curve iff it is a continuous image of $[0,1]$,
and not necessarily the graph of a function; but with
that usage, the plane is always a countable union of curves
by Peano \cite{Peano}.

\bigskip

Our second issue 
involves the isomorphism of $\aleph_1$-dense
subsets of $\RRR$.  

\begin{definition}
$E \subseteq \RRR$ is \emph{$\aleph_1$-dense} iff 
$|E \cap (x,y)| = \aleph_1$ whenever $x,y \in \RRR$ and $x < y$.
$\FF$ is the set of all order-preserving bijections from 
$\RRR$ onto $\RRR$.
\end{definition}

By Baumgartner \cite{Ba1,Ba2}, $\PFA$ implies that whenever $D,E$ are 
$\aleph_1$-dense, there is an $f \in \FF$ such that $f(D) = E$.
By \cite{AS, ARS}, this cannot
be proved from $\MA(\aleph_1)$ alone.
Clearly, every $f \in \FF$ is continuous, but we can ask
whether we can always get our $f$ to be $C^n$.

For $n = 2$, a $\ZFC$ counter-example is apparent from
Theorems \ref{thm-small} and \ref{thm-all-small},
since we may take $D$ to be $2$--small and $E$ to be not $2$--small,
and also assume that $D = D + \QQQ = \{ D + q : q \in \QQQ \}$
and $E = E + \QQQ$.  Note that $D$ is $2$--small iff $D + \QQQ $
is $2$--small, and the latter set is also $\aleph_1$-dense.

But in fact, even $n = 1$ is impossible, since the following holds
in $\ZFC$, as we shall show in Section \ref{sec-non-isom}:

\begin{theorem}
\label{thm-not-equiv}
There are $\aleph_1$-dense $D, E \subset \RRR$ such that
for all $f \in \FF$ and $\aleph_1$-dense $D^* \subseteq D$ and
$E^* \subseteq E$ with $f(D^*) = E^*$:
If $p < q$ and $a = f(p)$ and $b = f(q)$ then:
\begin{itemizz}
\item[1.] Either $f$ is not uniformly Lip\-schitz on $(p,q)$
\emph{or} $f\iv$ is not uniformly Lip\-schitz on $(a,b)$; equivalently,
whenever $0 < \Lambda  \in \RRR$, there are $x_0, x_1 \in (p,q)$
such that either $|f(x_1) - f(x_0)| > \Lambda |x_1 - x_0|$
\emph{or} $|x_1 - x_0| > \Lambda |f(x_1) - f(x_0)|$.
\item[2.] Either $f'$ does not exist at some $d \in D^* \cap (p,q)$ 
\emph{or} $(f\iv)'$  does not exist at some $e \in E^* \cap (a,b)$.
\item[3.] If $f'(d)$ exists for all $d \in D^* \cap (p,q)$, then $f'(d) = 0$
for all but countably many $d \in D^* \cap (p,q)$.
\end{itemizz}
\end{theorem}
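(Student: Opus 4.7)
The plan is to construct $D$ and $E$ by a transfinite recursion of length $\omega_1$, adding one point to each set at every stage, so that at the limit both sets are $\aleph_1$-dense and conditions 1--3 hold. The first step is to reduce condition 1 to a cleaner combinatorial form: a bi-Lipschitz order-preserving $f\colon (p,q)\to (a,b)$ is continuous and hence determined by its values on any dense subset, so if it carries an $\aleph_1$-dense $D^*\subseteq D\cap(p,q)$ onto $E^*\subseteq E\cap(a,b)$, then $f$ restricted to $D^*$ is the unique order-isomorphism $D^*\to E^*$, and its chord slopes $(f(d)-f(d'))/(d-d')$ lie in some compact $[\Lambda\iv,\Lambda]\subseteq(0,\infty)$. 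Conversely, an order-preserving bijection between such dense sets with chord slopes so bounded extends uniquely by continuity to a bi-Lipschitz function on $(p,q)$. Thus condition 1 is equivalent to the assertion that, for every pair of $\aleph_1$-dense $D^*\subseteq D$ and $E^*\subseteq E$ lying in matching open intervals, the induced order-isomorphism has chord slopes either unbounded above or with infimum $0$.

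The recursion uses two interleaved bookkeeping streams. The first is a standard density bookkeeping, forcing $|D\cap(r,s)|=|E\cap(r,s)|=\aleph_1$ for every rational interval $(r,s)$. The second aims to defeat potential bi-Lipschitz correspondences. The main obstacle is that the class of bi-Lipschitz order-isomorphisms on intervals has size $2^{\aleph_0}$, so we cannot enumerate and defeat them one by one in $\omega_1$ steps. I would overcome this by a countable-elementary-submodel argument: at stage $\alpha$ fix a countable $M_\alpha\prec H(\aleph_2)$ containing the partial construction, and choose the new pair $(d_\alpha,e_\alpha)$ generically over $M_\alpha$ in the sense that, for every countable partial order-preserving map with rationally-bounded slopes coded in $M_\alpha$, the chosen $e_\alpha$ is kept a positive distance from the bi-Lipschitz extension of that map evaluated at $d_\alpha$. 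A hypothetical counterexample $f$ would be determined by countable data (its restriction to $\QQQ\cap(p,q)$), and a reflection argument would place this data into $M_\alpha$ for a club of $\alpha$, contradicting the genericity built into each stage.

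Conditions 2 and 3 will then be derived from condition 1 with an additional layer of real analysis. For condition 3, suppose $f'(d)>0$ for uncountably many $d\in D^*\cap(p,q)$. A pigeonhole gives $0<\ell<M$ and an uncountable $D'\subseteq D^*$ with $f'(d)\in[\ell,M]$ throughout, and an Egorov-style uniformization yields a further uncountable $D''\subseteq D'$ on which the difference quotients are uniformly close to $f'(d)$ on some common $\delta$-neighborhood. Localizing to a rational subinterval in which $D''$ is $\aleph_1$-dense makes the restriction of $f$ a bounded-chord-slope order-isomorphism, contradicting condition 1. Condition 2 follows by applying this argument symmetrically to $f$ and $f\iv$. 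The technical heart of the whole proof is therefore the second paragraph's elementary-submodel diagonalization: it must be set up so that one genericity step at each stage blocks every $\aleph_1$-dense correspondence, not merely those visible at countable level, which is where the delicate combinatorial work resides.
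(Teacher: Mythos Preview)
The core gap is the sentence ``a reflection argument would place this data into $M_\alpha$ for a club of $\alpha$.'' A bi-Lipschitz $f$ is coded by the single real $f\res\QQQ$, but $\bigcup_{\alpha<\omega_1} M_\alpha$ contains only $\aleph_1$ reals; when $2^{\aleph_0}>\aleph_1$ there will be bi-Lipschitz $f$ lying in no $M_\alpha$ at all, and nothing in your recursion speaks to them. Replacing $f\res\QQQ$ by $f\res(D^*\cap M_\alpha)$ does not help: that object is a countable \emph{subset} of $M_\alpha$, not an element, and each $M_\alpha$ has continuum-many countable subsets. You correctly flag that this step ``is where the delicate combinatorial work resides,'' but no amount of delicacy will make an $\omega_1$-length diagonalization defeat continuum-many targets in $\ZFC$; your scheme is really a $\CH$ argument in disguise. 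The paper sidesteps this entirely. It builds two explicit Cantor sets $H,K$ with the purely geometric property that for every $\varepsilon>0$ there is $\delta>0$ such that whenever $(x_0,y_0),(x_1,y_1)\in H\times K$ with $0<|x_0-x_1|,|y_0-y_1|<\delta$, the slope $(y_1-y_0)/(x_1-x_0)$ lies outside $[\varepsilon,1/\varepsilon]$ in absolute value. Then $D,E$ are any $\aleph_1$ points of $H,K$, spread to $\aleph_1$-density by rational translates. Given an arbitrary $f,D^*,E^*$, one pigeonholes the uncountable graph $f\cap(D^*\times E^*)$ into a single translate-pair $(\tilde H+s)\times(\tilde K+t)$, extracts a convergent sequence $(x_n,y_n)\to(x_\omega,y_\omega)$ inside it, and the slopes $(y_\omega-y_n)/(x_\omega-x_n)$ are forced to $0$ or $\infty$ along a subsequence. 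No anticipation of $f$ is needed because the slope property is intrinsic to $H\times K$. Parts (2) and (3) are obtained by the same convergent-sequence argument applied to slightly different uncountable subsets of the graph, not by reduction to (1).

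There is also a secondary gap in your derivation of (3) from (1): you pass to an uncountable $D''$ on which difference quotients are controlled on a $\delta$-neighborhood and then ``localize to a rational subinterval in which $D''$ is $\aleph_1$-dense.'' But an uncountable set of size $\aleph_1$ need not be $\aleph_1$-dense in any interval --- it could lie inside a nowhere-dense Cantor set --- so condition (1) as stated in the theorem does not apply to $D''$. Even granting bounded chord slopes on $D''$, you would need (1) for merely uncountable (not $\aleph_1$-dense) subsets, which is a strictly stronger statement than what you set out to prove in the second paragraph.
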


In particular, $f$ cannot be in $C^1(\RRR)$, since $f'$ cannot
be $0$ everywhere, so if $f'$ were continuous, there would
be an interval on which $f' > 0$, contradicting (3).

On the other hand, $f'$ can exist everywhere and be $0$ on
a dense set if $f'$ is not required to be continuous:

\begin{theorem}
\label{thm-almost-c1}
Assume $\PFA$, and let $D,E \subset \RRR$ be $\aleph_1$-dense.
Then there exist $f \in \FF$ and 
$D^* \subseteq D$ such that $D^* $ is $\aleph_1$-dense
and $f(D^*) =  E$ and
\begin{itemizz}
\item[1.] For all $x \in \RRR$, $f'(x)$ exists and $0 \le f'(x) \le 2$.
\item[2.] $f'(d) = 0$ for all $d \in D^*$.
\end{itemizz}
\end{theorem}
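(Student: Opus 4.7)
The plan is to adapt Baumgartner's proper forcing for $\aleph_1$-dense isomorphisms so that conditions additionally carry a Lipschitz-$2$ piecewise-linear skeleton of $f$. A condition $p \in \PP$ specifies a finite order-preserving partial function $\pi_p$ from $D$ into $E$ with finite domain $D_p$ and range $E_p$, together with a continuous, weakly-increasing, piecewise-linear surjection $f_p : \RRR \to \RRR$ extending $\pi_p$ whose slopes all lie in $[0,2]$, a flat plateau of positive rational width $\delta_{p,d}$ centered at each $d \in D_p$, and rational parameters that record and control the slopes on the finitely many gaps between consecutive plateaus while bounding their future variation. The order $q \le p$ requires $\pi_q \supseteq \pi_p$, plateau widths and slope tolerances to shrink, and $f_q = f_p$ outside small neighborhoods of the pairs newly added in $\pi_q \setminus \pi_p$.

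Verifying basic extendibility: given $p$ and a candidate pair $(d,e) \in (D \setminus D_p) \times (E \setminus E_p)$ that is order-consistent with $\pi_p$ and lies strictly between the plateaus of consecutive commitments, one inserts a new plateau at $(d,e)$ of sufficiently small width and rebalances the slopes on the enclosing gap within the recorded tolerance; the $\aleph_1$-density of $D$ and $E$ guarantees that for any admissible $d$ a matching $e$ exists in a suitable open subinterval, and symmetrically for $e$. Properness follows the Baumgartner template: for a countable elementary submodel $M \ni \PP, D, E$ containing $p$, the condition $p^* := p$ is itself $(M, \PP)$-generic; the key lemma is that for any dense $\DD \in M$ and any $q \le p$, a compatible $r \in \DD \cap M$ is produced by replacing each element of $(D_q \cup E_q) \setminus M$ by a ``type-equivalent'' element of $D \cap M$ or $E \cap M$, using $\aleph_1$-density together with the openness of the slope constraints.

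Under $\PFA$ one meets the $\aleph_1$ many dense sets: $\{p : e \in \ran \pi_p\}$ for each $e \in E$ (forcing $\ran \pi_G = E$); $\{p : |D_p \cap I| \ge n\}$ for each open interval $I$ with rational endpoints and each $n \in \omega$ (forcing $D^* := \dom \pi_G$ to be $\aleph_1$-dense); shrinkage dense sets forcing $\delta_{q,d} \to 0$ at each committed $d$; and shrinkage dense sets forcing slope tolerances to go to $0$. The generic $f := \lim_{p \in G} f_p$ is then a well-defined $2$-Lipschitz element of $\FF$ with $f(D^*) = E$ and $0 \le f' \le 2$ wherever $f'$ exists.

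The main obstacle is ensuring that $f'(x)$ exists at \emph{every} $x \in \RRR$, since $|\RRR| = \aleph_2$ under $\PFA$ precludes dedicating a dense set to each real: this differentiability must be forced intrinsically by the slope-control data. The data must simultaneously guarantee that the slopes on the nested shrinking gaps around any non-committed $x$ form a Cauchy sequence (so $f'(x) \in [0,2]$ exists) and that the slopes on nested regions immediately adjacent to any committed plateau decay to $0$ (so $f'(d)=0$). Balancing these two competing local requirements --- slopes bounded away from $0$ so as to be intrinsically Cauchy for ``generic'' $x$, yet shrinking to $0$ next to $D^*$ --- is the delicate analytic core of the construction, probably handled by scale-dependent tolerances that relax near each committed plateau in a controlled way.
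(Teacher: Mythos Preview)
Your proposal correctly identifies the crux of the theorem---differentiability of $f$ at \emph{every} real---but does not actually resolve it; the final paragraph is a confession that this step is missing, ending with ``probably handled by scale-dependent tolerances''. That is precisely the gap. A uniform limit of $2$-Lipschitz piecewise-linear maps is automatically $2$-Lipschitz, hence differentiable almost everywhere, but there is no mechanism in your framework that forces the slopes around an \emph{arbitrary} real $x$ (possibly of height $\ge \aleph_1$, possibly not even in the ground model) to converge. You cannot devote a dense set to each such $x$, and ``slope tolerances going to $0$'' on finitely many gaps says nothing about how slopes oscillate as new plateaus are inserted inside a gap. Worse, the two requirements you isolate---slopes near $0$ adjacent to committed points, slopes Cauchy elsewhere---are in genuine tension for piecewise-linear skeletons: inserting a flat plateau inside a gap of average slope $s$ forces the remaining slope to \emph{increase} above $s$, so slopes near a generic $x$ will jump whenever a new $D^*$-point is committed nearby.

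The paper sidesteps this entirely by constructing $g = f'$ directly as a member of the class $\DD$ of bounded derivatives, using the Katznelson--Stromberg technique: each forcing condition carries, in addition to the finite partial isomorphism $\sigma$, a finite sequence of continuous approximants $g_0, g_1, \ldots, g_N$ with $g_{n+1} = g_n - \psi_n + \theta_n$, where each $\psi_n$ lies in the class $\AP_4$ of functions with the $4$-average property (built from translates of $(1 + r|x|)^{-1/2}$) and the $\theta_n$ are continuous with $\sum_n \|\theta_n\| < \infty$. Lemma~\ref{lemma-D-sum} then guarantees, purely analytically and with no reference to any particular real, that the pointwise limit $g$ satisfies $g(x) = \lim_{h\to 0} \frac{1}{h}\int_x^{x+h} g$, so $f(x) := \int_0^x g$ has $f'(x) = g(x)$ everywhere. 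The forcing only needs to ensure that $g_n(d) \to 0$ along a subsequence for each committed $d$ (condition (Q3)) and that $f_N$ approximates $\sigma$ well enough (condition (P13)); the everywhere-differentiability is built into the analytic machinery, not forced pointwise. The resulting poset is in fact ccc (not merely proper), proved via a height-based amalgamation lemma on elementary submodels, and PFA is invoked through the collapsing-the-continuum trick.
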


By (1),  $f$ satisfies a uniform Lipschitz condition
with Lipschitz constant $2$.    The ``$2$'' is an artifact of the proof,
and may be replaced by an arbitrarily small number;
if $\varepsilon > 0$, we can get our $f$ with $f'(x) \le 2$
so that $f(D^*) = (1/\varepsilon) E$;
then $\varepsilon f' (x) \le 2 \varepsilon$ and
$\varepsilon f(D^*) =  E$.
In (2), the $f'(d) = 0$ is to be expected, in view of
Theorem \ref{thm-not-equiv}(3).
We do not know whether we can make $D^*$ equal $D$.

The proof of Theorem \ref{thm-almost-c1}
in Sections \ref{sec-edif}  and \ref{sec-isom} actually shows that one
can force the result to hold in an appropriate ccc extension
of any model of $\ZFC + 2^{\aleph_0} = \aleph_1 + 2^{\aleph_1} = \aleph_2 $.
Then the result follows from $\PFA$ using the same forcing plus the
``collapsing the continuum'' trick.

We remark that Theorem \ref{thm-almost-c1} contradicts
Proposition 9.4  in the paper \cite{ARS}
of  Abraham,  Rubin, and  Shelah, which produces a
$\ZFC$ example of $\aleph_1$-dense $D, E \subset \RRR$ such that
every $f \in \FF$ with $f \cap (D \times E)$ uncountable
fails to be differentiable at uncountably many elements of $D$.
Their ``proof'' uses ideas similar to our proof of
Theorem \ref{thm-not-equiv}, but insufficient details
are given to be able to locate a specific error.

\section{On Smallness}
\label{sec-smallness}
We first point out that
Theorem \ref{thm-all-small} follows easily from known results:

\begin{proofof}{Theorem \ref{thm-all-small}}
For (1), fix $E \in [\RRR]^{\aleph_1}$.
By Sierpi\'nski,
$E^2 \subseteq \bigcup_{i \in \omega} (f_i \cup f_i\iv)$,
where each $f_i$ is the graph of a function and $|f_i| = \aleph_1$.
Then, assuming $\MA(\aleph_1)$,
a standard forcing shows that for each $i$,
there are Cantor sets $P_{i,n}$ for $n \in \omega$
with each $P_{i,n}$ the graph of a function and
$f_i \subseteq \bigcup_n P_{i,n}$.
Now each $P_{i,n}$ extends to a function 
$g_{i,n} \in C(\RRR,\RRR)$, so that
$E^2 \subseteq \bigcup_{i,n } (g_{i,n} \cup g_{i,n}\iv)$.

For (2), use the fact from \cite{HK} that under PFA,
every $A \in [\RRR^2]^{\aleph_1}$ is a subset of a
countable union of $C^1$
arcs.  Now apply this with $A = E \times E$, and note that
every $C^1$ arc is contained in a finite union of (graphs of) $C^1$
functions and inverse functions.

(4) also follows from \cite{HK}, which shows in $\ZFC$ that there
is an  $A \in [\RRR^2]^{\aleph_1}$ which is not a subset of a countable union
of $C^2$ arcs.  So, choose $E$ such that $A \subseteq E \times E$.

Likewise, (3) follows from \cite{Kun}, which shows that it
is consistent with $\MA(\aleph_1)$ to have
an  $A \in [\RRR^2]^{\aleph_1}$ which is a weakly Luzin set; and such a
set is not a subset of a countable union of $C^1$ arcs.
\end{proofof}

Next, to prove Theorem \ref{thm-small}, we first state
an abstract version of the argument involved:

\begin{lemma}
\label{lemma-coding}
Suppose that $T$ is an uncountable set with functions $f_i$ on $T$  for 
$i \in \omega$ such that for all countable $Q \subset T$,
there is an $x \in T$ such that $Q \subseteq \{f_i (x) : i \in \omega\}$.
Then there is an 
$E \subseteq T$ of size $\aleph_1$ such that
$E \times E \subseteq \Delta \cup \bigcup_{i} (f_i\cup f_i\iv)$,
where $\Delta$ is the identity function.
\end{lemma}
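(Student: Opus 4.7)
The plan is to build the required $E = \{x_\alpha : \alpha < \omega_1\}$ by transfinite recursion of length $\omega_1$, arranging at stage $\alpha$ that the new element $x_\alpha$ simultaneously codes every previously chosen $x_\beta$ as some $f_i(x_\alpha)$. Concretely, at stage $\alpha$ let $Q_\alpha = \{x_\beta : \beta < \alpha\}$. This is countable, so the hypothesis provides an $x_\alpha \in T$ with $Q_\alpha \subseteq \{f_i(x_\alpha) : i \in \omega\}$. Once such a construction is carried out and all $x_\alpha$ turn out to be distinct, the desired inclusion is immediate: for $\beta < \alpha$ we have $x_\beta = f_i(x_\alpha)$ for some $i$, so $(x_\alpha, x_\beta) \in f_i$ and $(x_\beta, x_\alpha) \in f_i\iv$; diagonal pairs lie in $\Delta$; hence $E \times E \subseteq \Delta \cup \bigcup_i (f_i \cup f_i\iv)$.

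The step I expect to be the main obstacle is forcing the $x_\alpha$ to be genuinely new, so that $|E| = \aleph_1$. The hypothesis as stated does not preclude the witness $x_\alpha$ from coinciding with some earlier $x_\beta$; if that happened cofinally often, $E$ could collapse to a countable set. To rule this out, I would enlarge the query set at each stage. Let
\[
W_\alpha \;=\; \bigcup_{\beta < \alpha} \{f_i(x_\beta) : i \in \omega\},
\]
which is a countable subset of $T$ (a countable union of countable sets), and choose $y_\alpha \in T \setminus (Q_\alpha \cup W_\alpha)$, possible since $T$ is uncountable while the forbidden set is countable. Now apply the hypothesis to the countable set $Q_\alpha \cup \{y_\alpha\}$ to obtain $x_\alpha$ with $Q_\alpha \cup \{y_\alpha\} \subseteq \{f_i(x_\alpha) : i \in \omega\}$. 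If it happened that $x_\alpha = x_\beta$ for some $\beta < \alpha$, then $y_\alpha = f_i(x_\alpha) = f_i(x_\beta) \in W_\alpha$, contradicting the choice of $y_\alpha$. Thus $x_\alpha \notin Q_\alpha$, and the recursion produces $\aleph_1$ distinct elements.

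With distinctness secured, the routine verification sketched in the first paragraph completes the proof: an arbitrary pair in $E \times E$ is either on the diagonal, of the form $(x_\alpha, x_\beta)$ with $\beta < \alpha$ (and therefore in some $f_i$), or of the form $(x_\beta, x_\alpha)$ with $\beta < \alpha$ (and therefore in the corresponding $f_i\iv$).
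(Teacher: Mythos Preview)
Your proof is correct and follows essentially the same approach as the paper: both build $E$ by transfinite recursion and handle the distinctness issue via the same ``enlarge the query set'' trick. The paper packages this trick as the preliminary observation that for every countable $Q$ there are \emph{uncountably many} witnesses $x$ (proved exactly by your $y_\alpha$ argument), and then simply chooses $e_\alpha \notin \{e_\xi : \xi < \alpha\}$ among those witnesses; you instead weave the enlargement directly into each stage of the recursion, but the content is the same.
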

\begin{proof}
Note, by considering supersets of $Q$, that there must be uncountably
many such $x$.  Now, let $E = \{e_\alpha : \alpha < \omega_1\}$
where $e_\alpha$ is chosen recursively so that
$e_\alpha \notin \{e_\xi  : \xi < \alpha\}
\subseteq \{f_i (e_\alpha) : i \in \omega\}$.
\end{proof}

To illustrate the idea of our argument, we first produce
an $E \in [\RRR]^{\aleph_1}$ which is $0$--small,
in which case $T$ can be any Cantor set.

\begin{lemma}
\label{lemma-cov}
There are $f_i \in C(2^\omega, 2^\omega)$ for $i < \omega$
such that for all countable non-empty $Q \subseteq 2^\omega$,
there is an $x \in 2^\omega$ such that
$Q = \{f_i(x) : i < \omega\}$.
\end{lemma}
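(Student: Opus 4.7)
The plan is to exploit a bijective coding $\omega \cong \omega \times \omega$ so that a single element of $2^\omega$ simultaneously encodes an $\omega$-sequence of elements of $2^\omega$, and take the $f_i$ to be the projections onto the coordinate sequences.

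Concretely, I would fix a bijection $\pi : \omega \to \omega \times \omega$ and, for each $i \in \omega$, define $f_i : 2^\omega \to 2^\omega$ by
\[
 f_i(x)(j) \;=\; x\bigl(\pi\iv(i,j)\bigr) \qquad (j \in \omega).
\]
Each $f_i$ is continuous because every coordinate $j$ of $f_i(x)$ depends on only one coordinate of $x$; in fact $f_i$ is just a projection onto a subsequence of $x$, which is Lipschitz in the standard ultrametric on $2^\omega$.

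Now given a countable nonempty $Q \subseteq 2^\omega$, enumerate $Q$ (with repetitions if $Q$ is finite, which is allowed since $Q \neq \emptyset$) as $Q = \{q_i : i \in \omega\}$. Define $x \in 2^\omega$ by $x(\pi\iv(i,j)) = q_i(j)$ for all $i,j \in \omega$; since $\pi$ is a bijection this determines $x$ uniquely on all of $\omega$. By construction $f_i(x) = q_i$ for every $i$, so $\{f_i(x) : i \in \omega\} = \{q_i : i \in \omega\} = Q$, as required.

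There is essentially no obstacle here: the only point to watch is making sure that the coding tolerates finite $Q$, which is handled by allowing repetitions in the enumeration, and that the $f_i$ really land in $C(2^\omega,2^\omega)$, which is immediate from their definition as coordinate projections. Applying Lemma \ref{lemma-coding} with $T = 2^\omega$ then yields, inside any fixed Cantor subset of $\RRR$, a $0$--small set $E$ of size $\aleph_1$; the analogous construction for Theorem \ref{thm-small} will require replacing coordinate projections with $C^\infty$ maps between a perfect subset of $\RRR$ and $\RRR$, but that is the content of the next step in the paper, not of this lemma.
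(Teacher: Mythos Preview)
Your proof is correct and is essentially the same as the paper's: both fix an injection (the paper uses a one-to-one $\varphi:\omega\times\omega\to\omega$, you use the inverse of a bijection $\pi$) and define $f_i(x)(j)$ as the $\varphi(i,j)$\textsuperscript{th} coordinate of $x$, then decode a given enumeration $\{q_i\}$ of $Q$ by choosing $x$ accordingly. Your extra remarks on continuity and on handling finite $Q$ by repetition are fine but not needed for the argument.
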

\begin{proof}
Let $\varphi$ map
$\omega\times\omega$ 1-1 into $\omega$, and 
let $(f_i(x))(j) = x(\varphi(i,j))$.
Now, let $Q = \{y_i : i \in \omega\}$.
Since $\varphi$ is 1-1, we may choose $x \in 2^\omega$
such that $x(\varphi(i,j)) = y_i(j)$ for all $i,j$;
then $f_i(x) = y_i$.
\end{proof}

So, if $T\subseteq \RRR$ is a Cantor set, then $T \cong 2^\omega$,
and the existence of an $E \in [T]^{\aleph_1}$ which is $0$--small
follows from Lemmas \ref{lemma-coding} and \ref{lemma-cov},
and the observation that every function in $C(T,T)$ extends
to a function in $C(\RRR,\RRR)$.

Now, if we want our functions to be smooth, as required by
Theorem \ref{thm-small}, we must be a bit more careful.
The $f_i$ will be defined on the standard middle-third Cantor set $H$,
but they will only 
satisfy Lemma \ref{lemma-coding} on a thin subset $T \subset H$.

To simplify notation,  $H$ will be a subset of $[0,3]$ rather than $[0,1]$.
For $x \in [0,3]$, 
$x \in H$ iff $x$ has only $0$s and $2$s in its ternary expansion,
so that $x =  \sum_{n\in\omega} x(n) 3^{-n}$, where each $x(n) \in \{0,2\}$,
and we write $x$ in ternary as $x(0) . x(1) x(2) x(3) x(4) \cdots$. 
If $x,y \in H$ with $x \ne y$,
let $\delta(x,y)$ be the least $n$ such that $x(n) \ne y(n)$, and note that
$3^{-n} \le |x - y| \le 3^{-n + 1}$.

Fix any $\Gamma : \omega \to \omega$ 
such that $\Gamma(0) = 0$,
$\Gamma$ is strictly increasing, and $\Gamma(k+1) \ge (\Gamma(k))^2$
for each $k$.
The minimum such $\Gamma$ is the sequence $0,1,2,4,16,256, \ldots$,
but any other such $\Gamma$ will do.

We view $x$ in $H$ as coding an $\omega$--sequence of \emph{blocks},
where  the $k^\mathrm{th}$  block
is a sequence of length $\Gamma(k+1) - \Gamma(k)$.
Note that
$\Gamma(k+2) - \Gamma(k + 1) \ge \Gamma(k+1) - \Gamma(k)$ for each $k$,
so the blocks get longer as $k \nearrow$.

More formally,
for $x \in H$ and $k \ge 0$, we define $B^x_k : \omega \to \{0,2\}$ 
so that $B^x_k(j) = x(\Gamma(k)  + j)$ when
$j < \Gamma(k+1) - \Gamma(k) $ and $B^x_k(j) = 0$ for 
$j \ge \Gamma(k+1) - \Gamma(k)$.
Note that $x$ is determined by $\langle B^x_k : k \in \omega \rangle$.
Let $B^x_k(j) = 0$ when  $k < 0$.

Now, we wish $x \in H$ to encode a sequence of $\omega$ element of $H$,
$\langle f_i(x) : i \in \omega \rangle$. 
We do this using a bijection $\varphi$
from $\omega \times \omega$ onto $\omega$. 
We assume that 
$\max(i,j) < \max(i',j') \rightarrow \varphi(i,j) < \varphi(i',j')$
for all $i,j,i',j'$,
which implies that $\max(i,j)^2 \le   \varphi(i,j) <  (\max(i,j) + 1)^2$.

In the ``standard'' encoding,
as in the proof of Lemma \ref{lemma-cov},
an $x \in \{0,2\}^\omega$ can encode
$\omega$ elements of $\{0,2\}^\omega$, where the $i^\mathrm{th}$ element
is $j \mapsto x(\varphi(i,j))$.  But here, for $x \in K$, we apply 
this separately to each of the $\omega$ blocks of $x$,
\emph{and} we shift right two places to ensure that the functions are smooth.
Define $f_i : H \to H$ so that for $x \in H$, $f_i(x)$ is
the $z \in H$ such that 
$B^z_k(j) = B^x_{k-2}(\varphi(i,j))$ for all $j$;
so $B^z_k(j) = 0$ when $k < 2$.  There is such a $z$ because
\[
j \ge \Gamma(k+1) - \Gamma(k ) \;\Rightarrow\;
\varphi(i,j)  \ge j \ge \Gamma(k-1) - \Gamma(k - 2) \;\Rightarrow\;
B^z_k(j) = 0 \ \ .
\]
Let $S = \{0,2\}^{<\omega}$.
For $i \in \omega$ and $s \in S$,
define $f_i^s : H \to H$ so that for $x \in H$, $f_i^s(x)$ is
the $z \in H$ such that $z(n)$ is $s(n)$ for $n < \lh(s)$ and
$f_i(x)(n)$ for $n \ge \lh(s)$.

Note that most elements of $H$ are not in
$\bigcup \{f_i^s(H) : i \in \omega \ \&\ s \in S\}$,
but the $T$ of Lemma \ref{lemma-coding} will be a proper subset of $H$.

First, we verify that we get $C^\infty$ functions.
Following \cite{HK}, call $f : H \to H$ \emph{flat} iff
for all $q \in \omega$,
there is a bound $M_q$ such that for all $u,t \in H$,
$| f(u) - f(t) | \le M_q |u - t|^q$.
By Lemma 6.4 of \cite{HK}, this implies that $f$ can be extended
to a $C^\infty$ function defined on all of $\RRR$,
all of whose derivatives vanish on $H$.

\begin{lemma}
Each $f_i^s $ is flat.
\end{lemma}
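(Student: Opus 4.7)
The plan is to exploit two features of the construction: the two-block right shift built into the definition of $f_i$, and the super-quadratic growth $\Gamma(k+2) \ge \Gamma(k+1)^2$. Fixing $i$, $s$, and $q \in \omega$, and given distinct $u, t \in H$, I would first locate the integer $m$ with $\Gamma(m) \le \delta(u,t) < \Gamma(m+1)$. Since $u$ and $t$ agree on every position below $\Gamma(m)$, their blocks satisfy $B^u_k = B^t_k$ for all $k < m$.

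The next step propagates this agreement through $f_i$. Because $B^{f_i(x)}_k(j) = B^x_{k-2}(\varphi(i,j))$, block $k$ of $f_i(u)$ matches block $k$ of $f_i(t)$ as soon as $k - 2 < m$, so $f_i(u)$ and $f_i(t)$ coincide on every position below $\Gamma(m+2)$. (The boundary cases $m = 0, 1$ are covered by the convention $B^x_k = 0$ for $k < 0$.) Passing to $f_i^s$ changes nothing relevant: on positions below $\lh(s)$ both values equal $s(n)$, while on positions in $[\lh(s), \Gamma(m+2))$ they equal the still-agreeing $f_i(u)(n)$ and $f_i(t)(n)$. Bounding geometrically the sum of termwise differences at positions $\ge \Gamma(m+2)$ then yields $|f_i^s(u) - f_i^s(t)| \le 3^{-\Gamma(m+2)+1}$.

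The final step is the arithmetic. Since $|u - t| \ge 3^{-\delta(u,t)} \ge 3^{-\Gamma(m+1)+1}$, one obtains
\[
\frac{|f_i^s(u) - f_i^s(t)|}{|u - t|^q} \;\le\; 3^{-\Gamma(m+2) + q\,\Gamma(m+1) + 1 - q}.
\]
The inequality $\Gamma(m+2) \ge \Gamma(m+1)^2$ forces the exponent to be at most $1 - q$ as soon as $\Gamma(m+1) \ge q$; for the finitely many smaller $m$ one simply takes an explicit maximum. Setting $M_q$ to be this maximum closes the argument. There is no real obstacle beyond keeping the block indices straight, so the proof is essentially a careful bookkeeping exercise translating the $+2$ block shift and the superquadratic growth of $\Gamma$ into flatness.
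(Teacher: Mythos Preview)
Your proof is correct and follows essentially the same route as the paper: locate the block index $m$ with $\Gamma(m)\le \delta(u,t)<\Gamma(m+1)$, use the two-block shift to conclude $\delta(f_i^s(u),f_i^s(t))\ge \Gamma(m+2)$, and then invoke $\Gamma(m+2)\ge \Gamma(m+1)^2$ to bound the ratio. Your write-up is slightly more explicit than the paper's about why replacing $f_i$ by $f_i^s$ is harmless and about the boundary cases $m=0,1$, and you use the marginally sharper estimate $|u-t|\ge 3^{-\Gamma(m+1)+1}$, but none of this constitutes a different approach.
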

\begin{proof}
Fix $x,y$ in $H$ with $x \ne y$.  Let $n = \delta(x,y)$.
Fix $k \in \omega$ so that $\Gamma(k) \le n < \Gamma(k+1)$.
Then $\Gamma(k + 2) \le \delta(f_i^s(x), f_i^s(y))$.
Now $|x - y| \ge 3^{-n} \ge 3^{- \Gamma(k + 1)}$,
and $|f_i^s(x) - f_i^s(y)| \le  3^{- \Gamma(k + 2) + 1}$, so
\[
|f_i^s(x) - f_i^s(y)| / |x - y| ^q \le
3^{ - \Gamma(k+2) + 1 + q \Gamma(k+ 1)} \le \ \ 
3^{ - (\Gamma(k+1))^2 + 1 + q \Gamma(k+ 1)}  \ \ ,
\]
which is bounded, and in fact goes to $0$ as $k \nearrow \infty$.
\end{proof}

Now, we define $T \subset H$:  For $x \in H$ and $k \in \omega$,
let  $\ell_k^x$ be the least $\ell \in \omega$ such that
$\forall j \ge \ell \, [B^x_k(j) = 0]$.
So, $\ell_k^x \le  \Gamma(k+1) - \Gamma(k)$.

Call $\psi : \omega \to \omega$ \emph{tiny}
iff $\lim_{k \to \infty} (\psi(k)^n)/k = 0 $ for all $n \in \omega$.
Note that tininess is preserved by powers and shifts.  That is,
if $\psi$ is tiny, then so is $k \mapsto \psi(k)^r$ and
$k \mapsto r + \psi(k + r)$ for each $r > 0$.

\begin{proofof}{Theorem \ref{thm-small}}
Let $T$ be the set of $x \in H$ such that $k \mapsto \ell_k^x$ is tiny.
Then $T$ is an uncountable Borel set, and we are done by
Lemma \ref{lemma-sequence}:
\end{proofof}

\begin{lemma}
\label{lemma-sequence}
If $y_i \in T$ for $i \in \omega$, then there is an 
$x \in T$ and $s_i \in S$ for $i \in \omega$ such that
$f_i^{s_i}(x) = y_i$ for all $i$.
\end{lemma}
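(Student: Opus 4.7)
The plan is to define $x$ by interleaving the digits of the $y_i$'s into the blocks of $x$ via the bijection $\varphi$: block $m$ of $x$ should hold, at position $\varphi(i,j)$, digit $j$ of block $m+2$ of $y_i$, for each $i$ and each admissible $j$. Because this alone may make $\ell^x_m$ too large for $x$ to lie in $T$, I stagger the construction so that $y_i$ only begins contributing once $m+2\ge K_i$, for a rapidly growing sequence $K_0<K_1<\cdots$; then $s_i$ will correct the finitely many initial blocks on which $f_i(x)$ still disagrees with $y_i$.

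First, I would choose the $K_i$ by diagonalization. Since each map $k \mapsto \ell^{y_i}_k$ is tiny, a standard argument yields strictly increasing $K_0<K_1<\cdots$ in $\omega$ such that
\[
m \mapsto M_m := \max\bigl\{\,\max(i,\ell^{y_i}_{m+2}) : K_i \le m+2\,\bigr\}
\]
(with the maximum over an empty set taken to be $0$) is tiny in $m$. The $K_i$ must grow fast enough that the active set $\{i : K_i \le m+2\}$ is tiny in $m$, while each active $\ell^{y_i}_{m+2}$ is controlled simultaneously; the closure of tininess under powers and shifts recorded just before the lemma is what makes this diagonalization possible, and it is the main obstacle in the proof.

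Next, I define $x\in H$ digit by digit: for each $m\in\omega$ and each $j'$ with $0\le j'<\Gamma(m+1)-\Gamma(m)$, factor $j'=\varphi(i,j)$ (unique since $\varphi$ is a bijection), and set
\[
x(\Gamma(m)+j') \;=\; \begin{cases} B^{y_i}_{m+2}(j) & \text{if } K_i \le m+2,\\ 0 & \text{otherwise.}\end{cases}
\]
To see $x\in T$, note that any position $\varphi(i,j)$ in block $m$ at which $B^x_m$ is nonzero must satisfy $K_i\le m+2$ and $j<\ell^{y_i}_{m+2}$, so $\max(i,j)\le M_m$; hence $\ell^x_m \le (M_m+1)^2$, which is tiny since powers of tiny functions are tiny.

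Finally, for each $i$ I check that $f_i(x)$ agrees with $y_i$ from some block $N_i$ onward, so that $s_i:=y_i\res\Gamma(N_i)$ satisfies $f_i^{s_i}(x)=y_i$. Using the defining relation $B^{f_i(x)}_k(j)=B^x_{k-2}(\varphi(i,j))$: when $\varphi(i,j)<\Gamma(k-1)-\Gamma(k-2)$ and $k\ge K_i$, this equals $B^{y_i}_k(j)$ by construction; when $\varphi(i,j)\ge\Gamma(k-1)-\Gamma(k-2)$, then $\max(i,j)\ge \sqrt{\Gamma(k-1)-\Gamma(k-2)}$, and the tininess of $\ell^{y_i}_k$ together with the very fast growth of $\Gamma$ force $j\ge \ell^{y_i}_k$, so both sides vanish---for all but finitely many $k$.
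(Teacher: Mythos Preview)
Your proof is correct and follows essentially the same strategy as the paper's: diagonalize to build $x\in T$ encoding all the $y_i$ (with a controlled amount of data per block so that $\ell^x_m$ stays tiny), then use the finite prefixes $s_i$ to patch the finitely many blocks on which $f_i(x)$ and $y_i$ still disagree. The only cosmetic difference is bookkeeping: the paper truncates by \emph{position} within each block via a single tiny function $\psi$ with $\psi\ge^*(k\mapsto (i+\ell^{y_i}_{k+2})^2)$, whereas you truncate by \emph{activation time} $K_i$; correspondingly, in the ``overflow'' case the paper invokes its lower bound on $\psi$ while you invoke the rapid growth of $\Gamma$, but the two arguments are interchangeable.
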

\begin{proof}
Fix any $\psi : \omega \to \omega$ such that
$\psi(k) \le \Gamma(k+1) - \Gamma(k) $ for all $k$.  Then we can
define $x \in H$ so that $  B^x_{k}(\varphi(i,j)) = B^{y_i}_{k+2}(j) $
whenever $\varphi(i,j) < \psi(k)$; let  $B_x^{k}(m) = 0$ for 
$m \ge \psi(k) $.  Then $x \in T$ provided that $\psi$ is tiny.

For each $i$, the function $k \mapsto (i + \ell_{k+2}^{y_i})^2$ is tiny.
Now, fix a tiny $\psi$ such that
$\psi(k) \le \Gamma(k+1) - \Gamma(k)$ for all $k \in \omega$
and $\psi \ge^* (k \mapsto (i + \ell_{k+2}^{y_i})^2)$
for each $i$; this is possible by a standard diagonal argument.

Now fix $i$.   Then fix $r \in \omega$ such that
$\psi(k) \ge (i + \ell_{k+2}^{y_i})^2$ for all $k \ge r$.
Let $s_i = y_i \res \Gamma(r + 2)$.
Let $z = f_i^{s_i}(x)$.  We shall show that $z = y_i$.
So, fix $n \in \omega$, and we show that $z(n) = y_i(n)$.
This is obvious if $n < \Gamma(r + 2)$, so assume that 
$n \ge \Gamma(r + 2)$.
Then fix $k \ge r+2$ and $j < \Gamma(k+1) - \Gamma(k)$
with $n = \Gamma(k) + j$.  We must show that
$B^z_{k}(j) = B^{y_i}_{k}(j)$.

By definition of $f_i^{s_i}$,
$B^z_k(j) = B^x_{k-2}(\varphi(i,j))$,
whereas we only know that $B^{y_i}_{k}(j) =  B^x_{k-2}(\varphi(i,j))$
when $\varphi(i,j) < \psi(k-2)$.
So, assume that  $\varphi(i,j) \ge \psi(k-2)$; we show that
$B^z_k(j) = 0$ and $ B^{y_i}_{k}(j) = 0$.

Now $ B^{y_i}_{k}(j) = 0$ because otherwise
$j < \ell_{k}^{y_i}$, and then
$\varphi(i,j) \le (i + j)^2 < (i + \ell_{k}^{y_i})^2 \le \psi(k-2)$,
a contradiction.

Also, $B^z_k(j) = B^x_{k-2}(\varphi(i,j)) = 0$ by
the  definition of $x$, since $\varphi(i,j) \ge \psi(k-2)$.
\end{proof}

\section{Non-Isomorphisms}
\label{sec-non-isom}
Here we prove Theorem \ref{thm-not-equiv}.
First,

\begin{lemma}
\label{lemma-cantor}
There are Cantor sets $H,K \subset \RRR$ such that
\begin{align*}
& \forall \varepsilon > 0 \; \exists \delta > 0 \;
\forall x_0, x_1 \in H \; \forall y_0, y_1 \in K  \; \\
&\qquad \big[ 0 < |x_1 - x_0| < \delta \; \wedge \;  0 < |y_1 - y_0| < \delta \;
\longrightarrow \\
&\qquad\qquad (y_1 - y_0)/(x_1 - x_0) \in
(-\varepsilon, \varepsilon) \cup 
(1/\varepsilon, \infty) \cup (-\infty, - 1/\varepsilon) \big] \ \ .
\end{align*}
\end{lemma}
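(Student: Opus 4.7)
The plan is to construct $H$ and $K$ via digit expansions in a fast-growing numeral system, placing them on complementary parities of positions so that the natural ``scales'' of the two sets are always interleaved by a factor tending to $0$ or $\infty$. I choose any integer sequence $N_k \to \infty$ with $N_k \ge 4$, and set $\pi_k = N_1 N_2 \cdots N_k$ (with $\pi_0 = 1$). The sets will be
\[
H = \Bigl\{ \textstyle \sum_{k=1}^{\infty} e_k / \pi_{2k} : e_k \in \{0,1\} \Bigr\}, \qquad
K = \Bigl\{ \textstyle \sum_{k=0}^{\infty} e_k / \pi_{2k+1} : e_k \in \{0,1\} \Bigr\} .
\]
Since $\pi_{2k+2}/\pi_{2k} \ge 16$, the natural digit parameterizations from $2^\omega$ are homeomorphisms, so $H$ and $K$ are Cantor sets.

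Next I establish that the digit expansion pins down distance up to a factor of $2$. For distinct $x, x' \in H$ whose expansions first disagree at position $2m$, the geometric tail bound $\sum_{k > m} 1/\pi_{2k} \le 2/\pi_{2m+2}$ (using $N_j \ge 2$) yields
\[
\frac{1}{2\pi_{2m}} \le |x - x'| \le \frac{2}{\pi_{2m}} ,
\]
and analogously $|y - y'| \in [1/(2\pi_{2n+1}),\ 2/\pi_{2n+1}]$ for $y \ne y' \in K$ whose first disagreement is at the odd position $2n+1$. Consequently the ratio $R := (y_1 - y_0)/(x_1 - x_0)$ satisfies $|R| \in \bigl[\pi_{2m}/(4\pi_{2n+1}),\ 4\pi_{2m}/\pi_{2n+1}\bigr]$.

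Parity now forces the dichotomy. Either $2m \le 2n$, in which case $\pi_{2m}/\pi_{2n+1} \le 1/N_{2m+1}$ and $|R| \le 4/N_{2m+1}$; or $2m \ge 2n+2$, in which case $\pi_{2m}/\pi_{2n+1} \ge N_{2n+2}$ and $|R| \ge N_{2n+2}/4$. Given $\varepsilon > 0$, I pick $M$ with $N_j > 4/\varepsilon$ for all $j \ge M$, and set $\delta = 1/(3\pi_M)$; the lower bounds on $|x_1-x_0|$ and $|y_1-y_0|$ then force $2m,\,2n+1 \ge M$, after which the dichotomy delivers $|R| < \varepsilon$ or $|R| > 1/\varepsilon$, exactly as required. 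The only point where care is needed is ensuring the multiplicative constants in $|x - x'| \approx 1/\pi_{2m}$ do not swallow the $N$-factor gap between the two cases, but the assumption $N_k \ge 4$ leaves generous slack, so this is purely a bookkeeping matter.
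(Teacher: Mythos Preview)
Your proof is correct and takes essentially the same approach as the paper: both build $H$ and $K$ as binary Cantor sets whose natural scales interleave with ratios tending to $0$, forcing the slope dichotomy. Your digit-expansion presentation (placing $H$ on even-indexed scales $1/\pi_{2k}$ and $K$ on odd-indexed scales $1/\pi_{2k+1}$, with $N_k\to\infty$) is just an explicit arithmetic instance of the paper's nested-interval scheme with $p_k \asymp 1/\pi_{2k}$, $q_k \asymp 1/\pi_{2k+1}$, where the paper's conditions $q_n/p_n\to 0$ and $p_{n+1}/q_n\to 0$ become $1/N_{2k+1}\to 0$ and $1/N_{2k+2}\to 0$.
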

\begin{proof}
We obtain $H,K$ by the usual trees of closed intervals:
\begin{itemizz}
\item[1.] $H = \bigcap_{n\in\omega} \bigcup\{I_\sigma :
\sigma \in \pre{n}{2}\}$
and $K = \bigcap_{n\in\omega} \bigcup\{J_\tau : \tau \in \pre{n}{2}\}$.
\item[2.] $I_\sigma = [a_\sigma, b_\sigma]$ and $J_\tau = [c_\tau, d_\tau]$.
\item[3.]
$a_\sigma = a_{\sigma\cat 0} <  b_{\sigma\cat 0} <
 a_{\sigma\cat 1} <  b_{\sigma\cat 1} = b_\sigma$.
\item[4.]
$c_\tau = c_{\tau\cat 0} <  d_{\tau\cat 0} <
 c_{\tau\cat 1} <  d_{\tau\cat 1} = d_\tau$
\item[5.]
Whenever $\lh(\sigma) = \lh(\tau) = n$:  
$b_\sigma - a_\sigma = p_n$ and $b_\tau - a_\tau = q_n$.
\end{itemizz}

Informally, assume that $\lh(\sigma) = \lh(\tau) = n$.  Then $I_\sigma \times J_\tau$
is a box of dimensions $p_n \times q_n$.  It will be very long
and skinny ($p_n \gg q_n$).
Inside this box will be four little boxes, of dimensions
$p_{n+1} \times q_{n+1}$, situated at the corners of the $p_n \times q_n$ 
box.  These little ones are much smaller; that is,
$p_n \gg q_n \gg p_{n+1} \gg q_{n+1}$.
Now suppose that the two points $(x_0,y_0)$ and $(x_1,y_1)$ both lie
in $I_\sigma \times J_\tau$, but lie in different smaller boxes
$I_{\sigma\cat \mu} \times J_{\tau \cat \nu}$.  So, there are 
$\binom{4}{2} = 6$ possibilities. 
For two of them, between 
$I_{\sigma\cat \mu} \times J_{\tau \cat 0}$ and
$I_{\sigma\cat \mu} \times J_{\tau \cat 1}$  ($\mu \in \{0,1\}$),
the slope $|\Delta y / \Delta x|$
is very large.  For the other four, between
$I_{\sigma\cat 0} \times J_{\tau \cat \nu}$ and
$I_{\sigma\cat 1} \times J_{\tau \cat \nu}$  ($\nu \in \{0,1\}$),
or between
$I_{\sigma\cat 0} \times J_{\tau \cat 0}$ and
$I_{\sigma\cat 1} \times J_{\tau \cat 1}$ or between
$I_{\sigma\cat 0} \times J_{\tau \cat 1}$ and
$I_{\sigma\cat 1} \times J_{\tau \cat 0}$,
$|\Delta y / \Delta x|$ is very small.

More formally, assume that $p_0 > q_0 > p_1 > q_1 > \cdots $ 
and $q_n/ p_n \to 0$ and $p_{n+1}/q_n \to 0$ as $n \to \infty$.
Fix  $(x_0,y_0)$ and $(x_1,y_1)$ in $H \times K$, and then fix $n$
such that for some $\sigma,\tau \in \pre{n}2$,
$(x_0,y_0) , (x_1,y_1) \in I_\sigma \times J_\tau$, but 
$(x_0,y_0) , (x_1,y_1)$ are in two different smaller boxes
$I_{\sigma\cat \mu} \times J_{\tau \cat \nu}$.  Note that this $n \to \infty$
as $\delta \to 0$.
In the two large slope cases,
$|\Delta y / \Delta x| \ge (q_n - 2 q_{n+1})/ p_{n+1} \to \infty$
as $n \to \infty$, since $q_n  / p_{n+1} \to \infty$ and 
$ q_{n+1}/ p_{n+1} \to 0$.
In the four small slope cases,
$|\Delta y / \Delta x| \le q_n / (p_n - 2 p_{n+1}) \to 0$,
since $p_n /q_n \to \infty$  and $ p_{n+1} / q_n \to 0$.
\end{proof}

\begin{proofof}{Theorem \ref{thm-not-equiv}}
Fix $H,K$ as in Lemma \ref{lemma-cantor}, 
and then fix $\tilde H \in [H]^{\aleph_1}$ and $\tilde K \in [K]^{\aleph_1}$.
Let 
$D = \bigcup\{ \tilde H + s : s \in \QQQ\}$ and
$E = \bigcup\{ \tilde K + t : t \in \QQQ\}$.

Now, fix $f, D^*, E^*, p,q,a,b$ as in Theorem \ref{thm-not-equiv}.
Then the function
$f^* := f \cap \big((D^* \cap (p,q)) \times (E^* \cap (a,b))\big)$
is uncountable, and is an order-preserving bijection from $D^* \cap (p,q) $
onto $E^* \cap (a,b)$.

Now fix $s,t \in \QQQ$ so that $f^* \cap (\tilde H + s) \times (\tilde K + t)$
is uncountable, so in particular it contains a convergent sequence.
So, we have $(x_n, y_n) \in f^*$ for $n \le \omega$,
with $x_n \to x_\omega$ and $y_n \to y_\omega$ as $n \nearrow \omega$,
and $x_n \in \tilde H + s$ and $y_n \in \tilde K + t$
for all $n \le \omega$.
We may assume that all the $x_n$ are distinct and that
all the $y_n$ are distinct.
Since $f^*$ is order-preserving and
the property of $H,K$ in Lemma \ref{lemma-cantor} is preserved by translation,
$ \forall \varepsilon > 0 \; \exists n\in \omega \; [
(y_\omega - y_n)/(x_\omega - x_n) \in
(0, \varepsilon) \cup (1/\varepsilon, \infty) ]$.
Passing to a subsequence, we may assume that either
$\forall n \in \omega\; [ (y_\omega - y_n)/(x_\omega - x_n) \in (2^n, \infty)]$
or
$\forall n \in \omega\; [ (y_\omega - y_n)/(x_\omega - x_n) \in (0, 2^{-n}) ]$.
In the first case, $f'(x_\omega)$ doesn't exist and $f$ is not
Lipschitz on $(p,q)$.
In the second case, $(f\iv)'(y_\omega)$ doesn't exist and $f\iv$ is not
Lipschitz on $(a,b)$.

For (3), repeat the argument, now letting $f^*$ be the set
of all $(d, f(d))$ such that  $p < d < q$ and $f'(d)$ exists and $f'(d) \ne 0$.
\end{proofof}

\section{Everywhere Differentiable Functions}
\label{sec-edif}
We prove here some lemmas to be used in the proof of
Theorem \ref{thm-almost-c1},  where
we shall construct the isomorphism $f$ along with its derivative $g$.

\begin{definition}
For $g: \RRR \to \RRR$,
let $\|g\| = \sup\{ |g(x)| : x \in \RRR\} \in [0, \infty]$.
\end{definition}

\begin{definition}
$\DD$ is the set of all measurable $g: \RRR \to \RRR$ such that 
$\|g\| < \infty$ and
$g(x) = \lim_{h \to 0} \frac 1 h \int_x^{x+ h} g(t) \, dt$
for all $x$.
\end{definition}

By this last condition, if $f(x) = \int_0^x g(t)\, dt$,
then $f'(x) = g(x)$ for all $x$.

Note that $\DD$ is a Banach space with the $\sup$ norm $\|\cdot\|$.
Also, $\DD$ contains all bounded continuous functions, and every function
in $\DD$ is of Baire class $1$; that is,
a pointwise limit of continuous functions.
However, many Baire $1$ functions,
such as $\cchi_{ \{0\} }$, fail to be in $\DD$.
A function in $\DD$ can be everywhere discontinuous;
this has been known since the 1890s;
see pp.~412--421 of Hobson \cite{Hob} for references.
Katznelson and  Stromberg \cite{KS} describe a method for constructing
such functions which we can embed into our forcing construction.
Here we summarize their method and make some minor additions to it.

\begin{definition}  For $\psi: \RRR \to \RRR$ and  $a \ne b$:
\[
\AV^b_a \psi  = \frac{1}{b-a} \int_a^b \psi(x) \, dx \ \ .
\]
\end{definition}

\begin{definition}
\label{def-AP}
Fix $C > 1$.  $\psi : \RRR \to \RRR$ has the \emph{$C$--average property}
iff $\psi $ is bounded and continuous, and $\psi(x) \ge 0$ for all $x$,
and $ \AV^b_a \psi  \le C \min(\psi(a), \psi(b)) $ whenever $a \ne b$.
Let $\AP_C$ be the set of all functions with the $C$--average property.
\end{definition}

So, the average value of $\psi$ on an interval is bounded by $C$ times  the
value at either endpoint.
Note that either $\psi(x) > 0$ for all $x$ or $\psi = 0$  for all $x$.
Also, $\AP_C$ is closed under finite sums and uniform limits, and
if $\psi\in \AP_C$
then $( x \mapsto \alpha \psi(\beta x + \gamma) ) \in \AP_C$
for all $\alpha, \beta, \gamma \in \RRR$ with $\alpha \ge 0$.
$\AP_C$  clearly contains all non-negative constant functions,
but also, by \cite{KS},
the function $(1 + |x|)^{-1/2}$ has the $4$--average property; see 
also Lemma \ref{lemma-psi-4}  below.  Functions in $\AP_C$ can be used to build
functions in $\DD$ by:

\begin{lemma}
\label{lemma-D-sum}
Fix $C > 1$.  Assume that all $\psi_j \in  \AP_C$.  Let $g(x) = \sum_{j\in\omega} \psi_j(x)$,
and assume that $g(x) < \infty$ for all $x$ and $\|g\| < \infty$.  Then $g  \in \DD$.
\end{lemma}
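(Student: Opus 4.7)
The plan is to verify the two nontrivial requirements of $\DD$: measurability of $g$ and the integral--average identity. Measurability is essentially free, since each $\psi_j$ is continuous, so the partial sums $s_n := \sum_{j \le n} \psi_j$ are continuous, and $g$ is their pointwise limit; hence $g$ is Baire class $1$. Boundedness is given. So the entire content lies in showing
\[
\lim_{h \to 0} \AV^{x+h}_x g \;=\; g(x) \qquad \text{for every } x \in \RRR.
\]

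First I would record the single numerical consequence of the $C$--average property that does all the work. For any $x$, any $h \ne 0$, and any $j$,
\[
0 \;\le\; \AV^{x+h}_x \psi_j \;\le\; C\, \psi_j(x).
\]
(The average is symmetric in the endpoints, so this holds for $h$ of either sign.) Since each $\psi_j \ge 0$, the monotone convergence theorem lets me interchange sum and integral:
\[
\AV^{x+h}_x g \;=\; \frac{1}{h} \int_x^{x+h} \sum_{j \in \omega} \psi_j(t)\, dt
\;=\; \sum_{j\in\omega} \AV^{x+h}_x \psi_j.
\]

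Now fix $x$ and let $a_j(h) := \AV^{x+h}_x \psi_j$. Two things are true as $h \to 0$: first, by continuity of $\psi_j$, $a_j(h) \to \psi_j(x)$ for each fixed $j$; second, by the bound above, $|a_j(h)| \le C\psi_j(x)$, and the dominating series
\[
\sum_{j \in \omega} C \psi_j(x) \;=\; C\, g(x) \;<\; \infty
\]
is summable by hypothesis. Dominated convergence for sums (i.e.\ for the counting measure on $\omega$) therefore yields
\[
\lim_{h \to 0} \AV^{x+h}_x g \;=\; \lim_{h \to 0} \sum_{j} a_j(h) \;=\; \sum_{j} \psi_j(x) \;=\; g(x),
\]
which is exactly the defining property of $\DD$.

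There is no real obstacle here; the only thing to be careful about is that the $C$--average inequality gives a clean, $h$--independent dominating bound $C\psi_j(x)$ that is already summable in $j$ by the finiteness of $g(x)$, so the swap of limit and sum is immediate. If one wanted to avoid invoking dominated convergence explicitly, one could instead split the sum at a large index $N$, bound the tail $\sum_{j>N} a_j(h) \le C \sum_{j>N} \psi_j(x)$ uniformly in $h$, and use continuity of each of the finitely many $\psi_j$ with $j \le N$ on the head --- but the dominated convergence phrasing is the cleanest.
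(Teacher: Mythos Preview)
Your proof is correct and rests on the same key estimate as the paper's, namely $\AV^{x+h}_x \psi_j \le C\,\psi_j(x)$ from the $C$--average property. The paper's version is exactly the explicit head/tail split you describe in your last paragraph: it fixes $m$ with $g_m(x) \ge g(x) - \varepsilon$, handles the head $g_m$ by continuity, and bounds the tail via $\AV^{x+h}_x(g - g_m) \le C(g(x) - g_m(x)) \le C\varepsilon$ (using closure of $\AP_C$ under finite sums rather than term-by-term). Your dominated-convergence packaging is a clean way to hide that bookkeeping, but the underlying argument is the same.
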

\begin{proof}
Fix $x \in \RRR$ and $\varepsilon > 0$.  It is sufficient to produce a $\delta > 0$ such that:
\[
\forall h \in (-\delta, \delta) \backslash \{0\} \ : \qquad
g(x) - 2 \varepsilon  \le
\AV^{x + h}_x g \le
g(x) + (C + 1) \varepsilon \ \  .   \tag{$\ast$}
\]
Let $g_m(x) = \sum_{j < m} \psi_j(x)$.  First fix $m$ such that $g_m(x) \ge g(x) - \varepsilon$.
Then fix $\delta > 0$ such that $|g_m(x) - g_m(x+h)| \le \varepsilon$ for all
$h \in (-\delta, \delta) \backslash \{0\}$.  Then, fix such an $h$, and we verify $(\ast)$.
For the first $\le$, use $g(x) -2 \varepsilon  \le g_m(x) - \varepsilon  \le
\AV_x^{x+h} g_m \le  \AV_x^{x+h} g $.  For the second $\le$, note that for each $n \ge m$,
$(g_n - g_m) \in \AP_C$, and hence $\AV_x^{x+h} (g_n - g_m) \le C( g_n(x) - g_m(x)) \le C \varepsilon$.
Letting $n \nearrow \infty$, we get 
 $\AV_x^{x+h} (g - g_m) \le C \varepsilon$,
 so that $\AV^{x + h}_x g \le  \AV^{x + h}_x g_m + C \varepsilon \le
   g_m(x) + (C + 1) \varepsilon \le g(x) + (C + 1) \varepsilon $.
\end{proof}

To verify that the function  $(1 + |x|)^{-1/2}$ has the $4$--average property:

\begin{lemma}
\label{lemma-AP}
Suppose that 
$\psi : \RRR \to [0, \infty)$ is a bounded measurable function
such that $\psi( x) = \psi( -x)$ for all $x$,
$\psi$ is decreasing for $x > 0$,
and $\AV_0^{ b} \psi \le C \psi( b)$ for all $b > 0$. 
Then $\psi \in \AP_{2C}$.
\end{lemma}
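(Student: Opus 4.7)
The plan is to verify the inequality $\AV_a^b \psi \le 2C \min(\psi(a), \psi(b))$ for all $a \ne b$, using the symmetry of $\psi$ to reduce to tractable cases and then splitting by the relative size of $a$ to $b$. Continuity, boundedness, and non-negativity are inherited directly from the hypotheses, so only the averaging estimate needs work.

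First I would normalize the problem using symmetry. The substitution $t \mapsto -t$ inside the integral shows that $\AV_a^b \psi = \AV_{-b}^{-a} \psi$, and $\min(\psi(a),\psi(b)) = \min(\psi(-b),\psi(-a))$ by evenness, so I may assume without loss of generality that $|a| \le b$ (and hence $b > 0$ and $\psi(b) = \min(\psi(a),\psi(b))$, since $\psi$ is decreasing on $[0,\infty)$).

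Next I would split into two cases. In the \emph{two-sided case} $a \le 0 \le b$ with $-a \le b$, I write $\int_a^b \psi = \int_0^{-a} \psi + \int_0^b \psi$ using evenness; since $-a \le b$, both integrals are bounded by $\int_0^b \psi \le Cb\,\psi(b)$ (by the hypothesis), giving $(b-a)\AV_a^b \psi \le 2Cb\,\psi(b)$, and then $\AV_a^b \psi \le \frac{2Cb}{b+|a|}\psi(b) \le 2C\,\psi(b)$. In the \emph{one-sided case} $0 \le a < b$, I split according to whether $a \le b/2$ or $a > b/2$. For $a \le b/2$: monotonicity of the integral and the hypothesis give $\int_a^b \psi \le \int_0^b \psi \le Cb\,\psi(b)$, and then $\AV_a^b \psi \le \frac{Cb}{b-a}\psi(b) \le 2C\,\psi(b)$.

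The subtle case, which I expect to be the main obstacle, is $b/2 < a < b$, where the naive bound $\AV_a^b \psi \le \frac{Cb}{b-a}\psi(b)$ blows up. Here I would first derive the auxiliary estimate $\psi(a) \le (2C-1)\psi(b)$ from the hypothesis as follows: since $\psi$ is decreasing on $[0,\infty)$,
\[
Cb\,\psi(b) \;\ge\; \int_0^b \psi \;=\; \int_0^a \psi + \int_a^b \psi \;\ge\; a\,\psi(a) + (b-a)\,\psi(b),
\]
so $a\,\psi(a) \le \bigl(b(C-1) + a\bigr)\psi(b)$, and since $a \ge b/2$, $\psi(a) \le \bigl(\tfrac{b(C-1)}{a} + 1\bigr)\psi(b) \le (2C-1)\psi(b)$. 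Combined with the trivial bound $\AV_a^b \psi \le \psi(a)$ from monotonicity on $[a,b]$, this yields $\AV_a^b \psi \le (2C-1)\psi(b) \le 2C\,\psi(b)$, completing the one-sided case and hence the proof.
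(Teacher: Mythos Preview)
Your argument is correct, and your two-sided case is essentially the paper's Case~I. Your one-sided case, however, works harder than the paper does. For $0 \le a < b$ the paper observes in one line that, since $\psi$ is decreasing on $[0,\infty)$, the average over $[a,b]$ is at most the average over $[0,b]$ (adjoining the interval $[0,a]$, where every value dominates $\psi(a) \ge \AV_a^b\psi$, can only raise the average), so $\AV_a^b \psi \le \AV_0^b \psi \le C\psi(b)$ directly---no split at $b/2$, no auxiliary bound $\psi(a)\le(2C-1)\psi(b)$. Your subcase $a>b/2$ is a nice trick but unnecessary here. One small slip: you say continuity is ``inherited directly from the hypotheses,'' but the lemma only assumes $\psi$ bounded and measurable; the paper's own proof likewise verifies only the averaging inequality, and in the intended application to $(1+|x|)^{-1/2}$ continuity is evident.
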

\begin{proof}
We must show that $\AV^b_a \psi \le 2C \min(\psi(a), \psi(b))$ whenever
$a \ne b$.  By symmetry, there are only two cases:

Case I:  $a < 0 < b$, where $0 < \hat a := -a \le b$
(so $\psi(a) \ge \psi(b)$):
\[
\AV^b_a \psi  = 
\frac{1}{b + \hat a } \left[ \int_0^b \psi +  \int_0^{\hat a} \psi \right] \le
\frac{1}{b} \cdot 2  \int_0^b \psi  \le 2C \psi(b) \ \ .
\]

Case II:  $0 \le a < b$:  Then, since $\psi$ is decreasing,
$ \AV^b_a \psi \le \AV^b_0 \psi \le C \psi(b)$.
\end{proof}

\begin{lemma}
\label{lemma-psi-4}
If $\psi(x) = (1 + |x|)^{-1/2}$ then $\psi \in \AP_4$.
\end{lemma}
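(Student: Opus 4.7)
The plan is to apply Lemma \ref{lemma-AP} with the constant $C = 2$, which will yield $\psi \in \AP_{2C} = \AP_4$. First I would check the structural hypotheses of that lemma: $\psi(x) = (1+|x|)^{-1/2}$ is continuous, bounded (by $\psi(0) = 1$), non-negative, even, and strictly decreasing on $(0,\infty)$, all of which are immediate.

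The only real content is the one-sided estimate $\AV_0^b \psi \le 2\,\psi(b)$ for $b > 0$. Here I would compute the antiderivative directly:
\[
\AV_0^b \psi \;=\; \frac{1}{b}\int_0^b (1+x)^{-1/2}\,dx \;=\; \frac{2\bigl((1+b)^{1/2} - 1\bigr)}{b}.
\]
So the desired inequality $\AV_0^b \psi \le 2(1+b)^{-1/2}$ becomes, after multiplying by $b(1+b)^{1/2}/2$, the statement $(1+b) - (1+b)^{1/2} \le b$, i.e.\ $(1+b)^{1/2} \ge 1$, which holds for all $b \ge 0$.

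Having verified this one-sided bound with $C = 2$, Lemma \ref{lemma-AP} gives $\psi \in \AP_4$, completing the proof. There is no genuine obstacle; the only step requiring any thought is the explicit integral computation and the subsequent algebraic manipulation, both of which are routine.
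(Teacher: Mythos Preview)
Your proof is correct and follows essentially the same approach as the paper: apply Lemma~\ref{lemma-AP} with $C=2$ by computing $\int_0^b (1+x)^{-1/2}\,dx = 2(\sqrt{1+b}-1)$ and reducing the required estimate to $\sqrt{1+b} \ge 1$. The paper's proof arranges the same computation as $\frac{1}{\psi(b)}\AV_0^b\psi = \frac{2}{b}[(1+b) - \sqrt{1+b}] < 2$, which is the identical algebra.
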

\begin{proof}
For $b > 0$,
\[
\frac{1}{\psi(b)} \AV_0^b \psi =
\frac{\sqrt{1 + b}}{b} \left[ 2 \sqrt{1 + b} - 2 \right ] =
\frac{2}{b} \left[ b + 1 -  \sqrt{1 + b} \right]  <  2 \ \ ,
\]
so apply Lemma \ref{lemma-AP}.
\end{proof}

\section{Isomorphisms}
\label{sec-isom}
This entire section is devoted to the proof of Theorem \ref{thm-almost-c1}.
We plan to construct $f$ along with $g = f'$, which will be in $\DD$;
so $f(x) = \int_0^x g(t) \, dt$.
We shall construct $g$ as a limit of an $\omega$--sequence,
using the following modification of Lemma \ref{lemma-D-sum}:

\begin{lemma}
\label{lemma-limit-D}
Assume that we have $g_n, \psi_n, \theta_n$ for $n \in \omega$
such that:
\begin{itemizz}
\item[1.] $g_0 \in C(\RRR, [0, \infty)\,)$ and $\|g_0\| < \infty$.
\item[2.] $\theta_n \in C(\RRR,\RRR)$, and 
$\sum_{n} \|\theta_n\| < \infty$.
\item[3.] Each $\psi_n \in \AP_4$.
\item[4.] $g_{n+ 1} = g_n - \psi_n + \theta_n$ and 
$g_{n+ 1}(x) \ge 0$ for all $x$.
\end{itemizz}
Then  $\langle g_n : n \in \omega \rangle$ converges pointwise
to some $g : \RRR \to [0, \infty)$, and $g \in \DD$.
\end{lemma}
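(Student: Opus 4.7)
The plan is to reduce everything to Lemma \ref{lemma-D-sum} by splitting the recursion into a continuous part built from the $\theta_n$ and a pure $\AP_4$-series part built from the $\psi_n$. First, unfolding condition (4) gives $g_n = g_0 - \sum_{j < n} \psi_j + \sum_{j<n} \theta_j$. Since the $\theta_j$ are continuous and $\sum_j \|\theta_j\| < \infty$, the Weierstrass $M$-test produces a bounded continuous limit $\Theta := \sum_{j} \theta_j \in C(\RRR, \RRR)$ with $\|\Theta\| \le \sum_j \|\theta_j\| < \infty$, and the partial sums $\sum_{j<n} \theta_j$ converge uniformly to $\Theta$.

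Next I would control the $\psi$-series. Each $\psi_j \in \AP_4$ is non-negative by Definition \ref{def-AP}, so the partial sums $\Psi_n(x) := \sum_{j<n} \psi_j(x)$ are non-decreasing in $n$ for each fixed $x$. The hypothesis $g_n(x) \ge 0$ rearranges to $\Psi_n(x) \le g_0(x) + \sum_{j<n} \theta_j(x) \le \|g_0\| + \|\Theta\|$, so at every $x$ the sequence $\Psi_n(x)$ is monotone and bounded, hence converges to some $\Psi(x) \in [0, \|g_0\|+\|\Theta\|]$. Setting $g := g_0 + \Theta - \Psi$, we immediately see that $g_n \to g$ pointwise, $g \ge 0$ (as the pointwise limit of the non-negative $g_n$), and $\|g\| \le \|g_0\| + \|\Theta\| < \infty$.

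It remains to show $g \in \DD$, and here I would use that $\DD$ is a Banach space (noted in the excerpt right after its definition), hence in particular a vector space. The function $g_0 + \Theta$ is bounded and continuous, and the excerpt states that $\DD$ contains all bounded continuous functions, so $g_0 + \Theta \in \DD$. For $\Psi$, the hypotheses of Lemma \ref{lemma-D-sum} are now in place with $C = 4$: the $\psi_j$ lie in $\AP_4$, their pointwise sum $\Psi(x)$ is finite for every $x$, and $\|\Psi\| < \infty$. So Lemma \ref{lemma-D-sum} delivers $\Psi \in \DD$, and linearity of $\DD$ gives $g = (g_0 + \Theta) - \Psi \in \DD$.

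No step is a genuine obstacle; the main content is recognizing that the non-negativity clause in condition (4), together with the non-negativity built into $\AP_4$, is precisely what is needed to bound the $\psi$-partial sums uniformly in $x$ and thereby feed Lemma \ref{lemma-D-sum}. The $\theta_n$ terms play no subtle role --- they just add a uniformly convergent continuous perturbation that is automatically in $\DD$.
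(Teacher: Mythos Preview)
Your argument is correct and is essentially identical to the paper's proof: both decompose $g$ as $(g_0 + \sum_n \theta_n) - \sum_n \psi_n$, bound the $\psi$-partial sums using $g_n \ge 0$, invoke Lemma~\ref{lemma-D-sum} for the $\psi$-series, and conclude via linearity of $\DD$. One small slip: the inequality $\sum_{j<n}\theta_j(x) \le \|\Theta\|$ need not hold (partial sums can overshoot the limit when the $\theta_j$ change sign), but replacing $\|\Theta\|$ by $\sum_j \|\theta_j\|$---which you already noted dominates $\|\Theta\|$---fixes this and is exactly the bound the paper uses.
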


\begin{proof}
Since all $\psi_i \ge 0$ and all $g_n \ge 0$,
all sums $h_n := \sum_{i < n} \psi_i$,
and hence also $h := \sum_{i < \omega} \psi_i$, are bounded by
$\|g_0\| + \sum_i \|\theta_i\|$.
It follows that the sequence
$\langle g_n : n \in \omega \rangle$ converges pointwise,
and $h \in \DD$ by Lemma \ref{lemma-D-sum}.
Then $g \in \DD$ because $g = g_0 + \sum_{n}\theta_n  - h$
and $ g_0 + \sum_{n}\theta_n \in \DD$ (since it is bounded and continuous).
\end{proof}

We plan to build the $\psi_n$ and $\theta_n$ by forcing,
and the forcing conditions will guarantee that
each $g_n(x) \ge 0$ for all $x$.
Besides $f(x) :=  \int_0^x g(t) \, dt$, we
also have $f_n(x) := \int_0^x g_n(t) \, dt$,  and the
$f_n$ will converge pointwise to $f$.
Since $f(0)$ must be $0$, we shall assume WLOG that $0 \in D \cap E$.
The proof applies the ``collapsing the continuum'' trick;
so we assume $\CH$, and we describe a ccc poset which forces the
$\psi_n$ and $\theta_n$.

To construct ccc posets, we use the standard setup with elementary submodels:

\begin{definition}
\label{def-elem-submod-chain}
Fix $\kappa$,  a suitably large regular cardinal.
Let $\langle M_\xi : 0 < \xi < \omega_1 \rangle$ be a continuous 
chain of countable elementary submodels of $H(\kappa)$,
with $D,E \in M_1$ and each $M_\xi \in M_{\xi + 1}$.
Let $M_0 = \emptyset$.
For $x \in \bigcup_\xi M_\xi$, let $\hgt(x)$, the \emph{height} of $x$,
be the $\xi$ such that $x \in M_{\xi+1} \backslash M_\xi$.
\end{definition}

By setting $M_0 = \emptyset$, we ensure that under $\CH$,
$\hgt(x)$ is defined whenever $x \in \RRR$ or 
$x$ is a Borel subset of $\RRR$.
Observe that $\{d \in D : \hgt(d) = \xi\}$ and
$\{e \in E : \hgt(e) = \xi\}$ are both countable and dense
for each $\xi < \omega_1$.

We now state the basic combinatorial lemma behind the proof
of ccc.  This lemma uses the compatibility symbol $\compat$,
but does not  mention forcing explicitly.

\begin{lemma}
\label{lemma-compat}
Assume $\CH$.  Say we have $2n$--tuples
\[
p^\alpha =
\big( (d^\alpha_0, e^\alpha_0) , \ldots , (d^\alpha_{n-1}, e^\alpha_{n-1})\big) 
\in \RRR^{2n}
\]
for $\alpha < \omega_1$.   Fix $\varphi \in C( (0, \infty), (0, \infty) )$.
Assume that:

\begin{itemizz}
\item[a.]
$d^\alpha_i \ne d^\beta_i$ and $e^\alpha_i \ne e^\beta_i $
for all $\alpha,\beta,i$ with $\alpha \ne \beta$.
\item[b.]
$\hgt(d^\alpha_i) > \hgt(e^\alpha_i)$ for all $\alpha,i$.
\item[c.]
$\hgt(d^\alpha_i) \ne \hgt(d^\alpha_j) $ 
for all $\alpha,i,j$ with $i \ne j$.
\end{itemizz}
Then there exist $\alpha \ne \beta$ with
$p^\alpha \compat p^\beta$, in the sense that for all $i < n$, the slope
$( e^\beta_i - e^\alpha_i) / ( d^\beta_i - d^\alpha_i)  > 0$ 
and also
$| e^\beta_i - e^\alpha_i| < \varphi( |d^\beta_i - d^\alpha_i| ) $.
\end{lemma}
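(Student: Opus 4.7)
The plan is an elementary-submodel argument that reflects compatibility into the countable models $M_\xi$ coordinate-by-coordinate, exploiting (b) to keep each $e^\beta_i$ one step ahead of $d^\beta_i$ in the height ordering. It runs in three phases.

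\emph{Normalization.} By (c), the heights $\hgt(d^\alpha_i)$ within each $p^\alpha$ are pairwise distinct; after passing to an uncountable subset and permuting coordinates uniformly (there are only $n!$ orderings), I assume $\hgt(d^\alpha_0)<\cdots<\hgt(d^\alpha_{n-1})$ for every $\alpha$. By a standard diagonal argument under $\CH$, I then pass to an uncountable $S\subseteq\omega_1$ such that for $\alpha<\alpha'$ in $S$, every height appearing in $p^\alpha$ is strictly below every height appearing in $p^{\alpha'}$; hence $p^\alpha\in M_{\hgt(d^{\alpha'}_0)}$ for such pairs. A final thinning (removing countably many exceptions) ensures that for each $i<n$, every $(d^\alpha_i,e^\alpha_i)$ with $\alpha\in S$ is an $\aleph_1$-condensation point in $\RRR^2$ of $X_i:=\{(d^{\alpha'}_i,e^{\alpha'}_i):\alpha'\in S\}$.

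\emph{Reflection.} Fix $\beta\in S$ (uncountably many choices remain) and set $\xi_i:=\hgt(d^\beta_i)$, so $\xi_0<\cdots<\xi_{n-1}$. By (b), $e^\beta_i\in M_{\xi_i}$ while $d^\beta_i\in M_{\xi_i+1}\setminus M_{\xi_i}$. I build a descending chain $S\cap\beta=S_0\supseteq S_1\supseteq\cdots\supseteq S_n$ with each $S_i$ definable in $M_{\xi_i}$, uncountable, and consisting of $\alpha$ compatible with $p^\beta$ at coordinates $0,\ldots,i-1$. At stage $i$, by pigeonhole on sign one of $\{\alpha\in S_i:e^\alpha_i>e^\beta_i\}$ or its mirror is uncountable; say the first. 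Using continuity and positivity of $\varphi$, I pick in $M_{\xi_i}$ a rational $\delta>0$ and a rational interval $(a,b)\subset(0,\infty)$ with $\varphi>\delta$ on $(a,b)$, and set
\[
S_{i+1}:=\{\alpha\in S_i:0<e^\alpha_i-e^\beta_i<\delta \text{ and } d^\alpha_i-d^\beta_i\in(a,b)\},
\]
which is definable in $M_{\xi_{i+1}}$ (since $d^\beta_i\in M_{\xi_{i+1}}$). Uncountability of $S_{i+1}$ comes from the $2$-dimensional density of $X_i$ near $(d^\beta_i,e^\beta_i)$: the shifted box $(d^\beta_i+a,d^\beta_i+b)\times(e^\beta_i,e^\beta_i+\delta)$ meets $X_i$ uncountably often. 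Any $\alpha\in S_n$ is then compatible with $p^\beta$ at every coordinate.

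\emph{Main obstacle.} The crux is the density step just above: ensuring that the shifted box meets the \emph{restricted} set $\{(d^\alpha_i,e^\alpha_i):\alpha\in S_i\}$ uncountably often, not merely $X_i$, and that the target point $(d^\beta_i,e^\beta_i)$ lies in the region of $2$-dimensional $\aleph_1$-density rather than on a lower-dimensional accumulation locus. I will handle this by interleaving each refinement $S_{i-1}\to S_i$ with further definable thinnings that restore strong $2$-dimensional $\aleph_1$-density of the projections at the remaining coordinates $j\ge i$; each thinning removes only countably many indices, so the $S_i$ remain uncountable throughout, and the continuity of $\varphi$ lets the rationals $\delta,a,b$ in $M_{\xi_i}$ be chosen fine enough that the shifted box lies in any prescribed neighborhood of $(d^\beta_i,e^\beta_i)$.
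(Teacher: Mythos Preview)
Your overall architecture—fix a single $\beta$ and refine a descending chain $S_0\supseteq\cdots\supseteq S_n$ of candidate $\alpha$'s, one coordinate at a time—has a genuine gap at the uncountability step, and the ``Main obstacle'' paragraph does not close it.

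The specific failure is the inference from ``$(d^\beta_i,e^\beta_i)$ is a $2$--dimensional $\aleph_1$--condensation point of $X_i$'' to ``the shifted box $(d^\beta_i+a,\,d^\beta_i+b)\times(e^\beta_i,\,e^\beta_i+\delta)$ meets $X_i$ uncountably''. Condensation only controls genuine neighborhoods of the point; it says nothing about an off-center box lying entirely in one quadrant. (For instance, the hypotheses do not prevent the relevant projections from clustering along a set which, locally near $(d^\beta_i,e^\beta_i)$, lies in the \emph{wrong} pair of quadrants.) Worse, even the condensation premise evaporates after the earlier refinements: the conditions defining $S_i$ involve coordinates $0,\dots,i-1$, and nothing prevents them from pushing the $i$th projections $\{(d^\alpha_i,e^\alpha_i):\alpha\in S_i\}$ entirely away from $(d^\beta_i,e^\beta_i)$. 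Your proposed fix, ``thin further to restore $2$--dimensional $\aleph_1$--density,'' cannot work here: density is a property of a set relative to itself, and no thinning of $S_i$ can force the \emph{external} point $(d^\beta_i,e^\beta_i)$ to become a condensation point of the thinned set. There is also a secondary technical issue: your sets $S_i\subseteq\omega_1$ are defined using the sequence $\langle p^\alpha\rangle$ as a parameter, but the models $M_\xi$ of Definition~\ref{def-elem-submod-chain} are fixed in advance and need not contain that sequence; only objects coded by reals (such as closed subsets of $\RRR^{2n}$) are guaranteed to lie in some $M_\zeta$ under $\CH$.

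The paper's proof avoids both problems by proceeding differently. It inducts on $n$ and, crucially, does \emph{not} fix one index and search for a partner. Instead it passes to the closure $F=\cl\{p^\alpha\}\subseteq\RRR^{2n+2}$ (a closed set, hence coded by a real and in some $M_\zeta$), and uses the height hypothesis to show that each one--dimensional slice $F^\alpha=\{x:q^\alpha_x\in F\}$ is uncountable, since $F^\alpha\in M_{\hgt(p^\alpha)}$ while $d^\alpha_n\in F^\alpha\setminus M_{\hgt(p^\alpha)}$. This yields two separated witnesses $u^\alpha<v^\alpha$ in each slice; after thinning so these lie in fixed rational intervals $U<V$ and the $e^\alpha_n$'s are $\Xi$--close, the induction hypothesis is applied to the truncated tuples, and the resulting pair is then \emph{perturbed through the closure} to actual indices $\delta,\epsilon$ with $d^\delta_n\in U$, $d^\epsilon_n\in V$. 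Both members of the compatible pair are produced by approximation in $F$; neither is fixed in advance. That is exactly the mechanism your argument is missing.
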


Here, we are asserting that
the two-element partial function
$\{ (d^\alpha_i , e^\alpha_i ) , (d^\beta_i  , e^\beta_i) \}$
is order-preserving, and also has slope bounded by
a ``small'' function $\varphi$.

\medskip

\begin{proof} Induct on $n$.  The case $n = 0$ is trivial,
so assume the result for $n$ and we prove it for $n+1$, so
now $ p^\alpha =
( (d^\alpha_0, e^\alpha_0)  ,  \ldots, (d^\alpha_{n}, e^\alpha_{n}))
\in \RRR^{2n+2}$.  Applying (b)(c), WLOG, each sequence is arranged so
that $\hgt(p^\alpha) = \hgt(d^\alpha_{n})$, and hence
$\hgt(p^\alpha) > \hgt(d^\alpha_{i})$ for all $i < n$ and
$\hgt(p^\alpha) > \hgt(e^\alpha_{i})$ for all $i \le n$.
Also, by (a), WLOG, $\alpha < \beta \to \hgt(p^\alpha) < \hgt(p^\beta)$,
which implies that $\hgt(p^\alpha) \ge \alpha$.

Let $F = \cl\{ p^\alpha : \alpha < \omega_1\} \subseteq \RRR^{2n+2}$.
For each $\alpha$ and each $x \in \RRR$, obtain
$q^\alpha_x \in \RRR^{2n+2}$
by replacing the $d^\alpha_{n}$ by $x$ in $p^\alpha$.
Let $F^\alpha = \{x \in \RRR : q^\alpha_x \in F\}$.
Fix $\zeta$ such that $F \in M_\zeta$.
For $\alpha \ge \zeta$, $F^\alpha$ is uncountable because
$d^\alpha_{n} \in F^\alpha$ and $F^\alpha \in M_{\hgt(p^\alpha)}$
while $d^\alpha_{n} \notin M_{\hgt(p^\alpha)}$.
So, choose any $u^\alpha, v^\alpha \in F^\alpha$ with
$u^\alpha < v^\alpha$.  Then, get an uncountable 
$S \subseteq \omega_1 \backslash \zeta$, along with
rational open intervals $U,V$ such that $\sup U < \inf V$ 
and $u^\alpha \in U$ and $v^\alpha \in V$.
Let $\Xi = \inf\{\varphi(y - x) : y \in V \ \&\ x \in U\}$.
Thinning $S$, we assume also that for $\alpha,\beta\in S$,
$| e^\alpha_n - e^\beta_n | < \Xi$.

Let $\stp^\alpha = p^\alpha \res (2n)$ (delete the last pair).
Applying induction, fix $\alpha,\beta \in S$ such that
$\stp^\alpha \compat \stp^\beta$ and
$e^\alpha_n < e^\beta_n$.
Now,
$q^\alpha_{u^\alpha}, q^\beta_{v^\beta} \in F$,
so we may choose $p^\delta, p^\epsilon$ sufficiently close to 
$q^\alpha_{u^\alpha}, q^\beta_{v^\beta} \in F$, respectively,
such that $\stp^\delta \compat \stp^\epsilon$ and also so that
$d^\delta_n \in U$ and
$d^\epsilon_n \in V$, and also so that
$ 0  <  e^\epsilon_n - e^\delta_n < \Xi$.
Then
$( e^\epsilon_n - e^\delta_n) / ( d^\epsilon_n - d^\delta_n)  > 0$ 
and also
$| e^\epsilon_n - e^\delta_n| < \varphi( |d^\epsilon_n - d^\delta_n|) $,
so $p^\delta \compat p^\epsilon$.
\end{proof}

Our forcing conditions will contain, among other things,
a finite $\sigma \subseteq D \times E$ which is a partial isomorphism; this
$\sigma$ will be a sub-function of the $f$ of Theorem \ref{thm-almost-c1}.
We let $g_0(x) = x^2 / (x^2 + 1)$, so
that $f_0(x) = x - \arctan(x)$.
The forcing conditions will determine successively
$\psi_0, \theta_0, \psi_1, \theta_1, \ldots$, and hence also $g_1, f_1, g_2, f_2, \ldots$.
We shall demand that all $\psi_n, \theta_n \in M_1$
(and hence also all $g_n, f_n \in M_1$), 
so that there are only countably many possibilities for them;
this will facilitate the proof that the poset is ccc. 
Then, $\lim_n f_n = f \supset \sigma$; the
$f_n$ will not actually extend $\sigma$;
rather, they will approximate $\sigma$ in the sense of the
following definition:

\begin{definition}
\label{def-correct}
$(\tau,g,f, \iota)$ is \emph{correctable} iff:
\begin{itemizz}
\item[\u P1.] $(0,0) \in \tau$ and
$\tau \in [ \RRR \times \RRR ]^{<\omega}$.
\item[\u P2.] $\tau$ is an order-preserving bijection.
\item[\u P7.] $g \in C(\RRR, [0,\infty) )$ and $g\iv\{0\} = \{0\}$.
\item[\u P8.]  $f(x) = \int_0^x g(t) \, dt$.
\item[\u P13.] $\iota > 0$ and whenever
$(d_0, e_0), (d_1, e_1) \in \tau$ and $d_0 < d_1$:
\[
0
\;<\; \frac{e_1 - e_0}{d_1 - d_0}  -
\frac{f(d_1) - f(d_0)}{d_1 - d_0}  \;<\;  \iota \ \ .
\]
\end{itemizz}
\end{definition}

The labels on these items correspond to the labels in Definition
\ref{def-poset} (of $\PPP$).
In $\PPP$, the $f,g$ will be replaced by suitable $f_n,g_n$.

Think of $\iota$ as being ``very small''.
So,  our hypotheses (\u P2)(\u P3)(\u P7)(\u P13) imply that
$f$ and $\tau$ are strictly increasing,
and between $d_0,d_1 \in \dom(\tau)$, the slope of
$f$ is very slightly less than the slope of $\tau$.

We remark that it is sufficient to assume that 
(\u P13) holds between adjacent elements of
$\dom(\tau)$; that implies the full (\u P13), since
if $d_0 < d_1 < d_2$ we have
\[
\begin{array}{ll}
0 \;<\; \big(\tau(d_1) -\tau(d_0) \big) - \big(f(d_1) -f(d_0) \big)
\;<\; \iota(d_1 - d_0)       &  \& \\
0 \;<\; \big(\tau(d_2) -\tau(d_1) \big) - \big(f(d_2) -f(d_1) \big)
\;<\; \iota(d_2 - d_1)       &  \Longrightarrow \\
0 \;<\; \big(\tau(d_2) -\tau(d_0) \big) - \big(f(d_2) -f(d_0) \big)
\;<\; \iota(d_2 - d_0)    \ \ \ .
\end{array}
\]

Since $f(0) = \tau(0) = 0$, we can set
$d_0 = 0$ or $d_1 = 0$ in (\u P13) to obtain, for
$(d,e) \in \tau$:
\[
\text{\it \u P12.}
\; \;  d,e > 0 \,\to\,  0 < (e - f(d)) <  \iota d \quad ; \quad
d,e < 0 \,\to\,  0 > (e - f(d)) >  \iota d  \ \ .
\]
That is, if $(d,e) \in \tau$,
then $f(d)$ is a slight under-estimate of $e$ when $d > 0$
and a slight over-estimate of $e$ when $d < 0$.
The next lemma says that this ``error'' can be corrected by
adding a small positive function $\theta$ to $g$:

\begin{lemma}
\label{lemma-correct}
Assume that $(\tau,g,f, \iota)$ is correctable and $J \subset \RRR$
is finite.  Then for some $\theta : \RRR \to \RRR$:
\begin{itemizz}
\item[a.]  $\theta(x) \ge 0$ for all $x$, and 
$\|\theta\| < \iota$, and $\theta(x) \to 0$ as $x \to \pm\infty$.
\item[b.] $\theta$ is continuous, and $\theta(d) = 0$ for all
$d \in J$.
\item[c.]  If $g^* = g + \theta$
and $f^*(x) = \int_0^x g^*(t) \, dt$, then $f^*(d) = e$ for
each $(d,e) \in \tau$.
\end{itemizz}
\end{lemma}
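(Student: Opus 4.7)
The plan is to determine $\theta$ as the derivative of the correction $\eta(x) := f^*(x) - f(x) = \int_0^x \theta(t)\,dt$, so that requirement (c) is equivalent to $\eta(d) = e - f(d)$ for every $(d,e) \in \tau$. Enumerating $\dom(\tau) = \{d^0 < \cdots < d^{m-1}\}$ and writing $T_i := e^i - f(d^i)$ for the desired value of $\eta(d^i)$, the fact that $(0,0) \in \tau$ gives some $d^{i^*} = 0$ with $T_{i^*} = 0$, consistent with $\eta(0) = 0$ automatically. The adjacent form of (\u P13), spelled out just before the lemma, gives
\[
T_{i+1} - T_i = (e^{i+1} - e^i) - (f(d^{i+1}) - f(d^i)) \in \bigl(0,\, \iota(d^{i+1} - d^i)\bigr),
\]
so the required average value $c_i := (T_{i+1} - T_i)/(d^{i+1} - d^i)$ of $\theta$ on $[d^i, d^{i+1}]$ lies strictly in $(0, \iota)$.

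Next I would refine to $S := \dom(\tau) \cup J$, enumerated as $u_0 < \cdots < u_{N-1}$, declare $\theta \equiv 0$ outside $[d^0, d^{m-1}]$, and on each sub-interval $[u_j, u_{j+1}]$ (which is contained in some $[d^i, d^{i+1}]$), build a continuous non-negative bump vanishing at both endpoints with prescribed integral $c_i(u_{j+1} - u_j)$. The pieces inside $[d^i, d^{i+1}]$ then contribute total integral $c_i(d^{i+1} - d^i) = T_{i+1} - T_i$, so a telescoping walk from $0$ in either direction yields $\eta(d^i) = T_i$ for every $i$. By construction $\theta$ is globally continuous, non-negative, compactly supported (hence $\theta(x) \to 0$ as $x \to \pm\infty$), and vanishes on $J \subseteq S$.

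The actual work is realizing each such bump under the pointwise bound $\|\theta\| < \iota$. A naive tent with average $c_i$ peaks at $2c_i$, which may exceed $\iota$ when $c_i$ is close to $\iota$, so I would instead use a trapezoid: on $[u_j, u_{j+1}]$ of length $\ell$, ramp linearly from $0$ up to height $h := c_i\ell/(\ell - \varepsilon)$ on margins of width $\varepsilon$ at each end, and hold the plateau value $h$ in between. The integral equals $c_i\ell$ exactly, and since $c_i < \iota$ strictly, choosing $\varepsilon > 0$ small enough forces $h < \iota$. Taking the maximum of these heights over the finitely many pieces gives $\|\theta\| < \iota$.

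The main (and essentially only) obstacle is exactly this pointwise-vs-average tension: converting the ``average slope strictly less than $\iota$'' information from (\u P13) into a pointwise bound on $\theta$ while simultaneously enforcing the extra zeros at $J$. The trapezoidal truncation resolves it without any tools beyond elementary geometry. Properties (a) and (b) are then immediate from the construction, and (c) follows from the telescoping identity $\eta(d^i) = T_i = e^i - f(d^i)$.
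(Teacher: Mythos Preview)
Your argument is correct and follows the same reduction as the paper: translate (c) into prescribing $\int_{d^i}^{d^{i+1}}\theta = (e^{i+1}-e^i)-(f(d^{i+1})-f(d^i)) \in (0,\iota(d^{i+1}-d^i))$ on adjacent intervals, then realize those integrals by non-negative bumps vanishing at the points of $\dom(\tau)\cup J$. The paper simply asserts that a suitable $C^\infty$ function $\theta$ exists; your explicit trapezoidal construction is a perfectly valid (and more concrete) way to carry this out, yielding a continuous $\theta$, which is all the lemma requires.
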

\begin{proof}
Since $\tau(0) = f(0) = 0$, item (c) will hold if we have,
for adjacent $d_0, d_1 \in \dom(\tau)$ with $d_0 < d_1$:
\[
\int_{d_0}^{d_1} \theta(t) \, dt =
(\tau(d_1) - \tau(d_0)) - ( f(d_1) - f(d_0) ) \ \  , 
\]
and this quantity is assumed to lie in $(0, \iota (d_1 - d_0))$.
It is now easy to construct a $C^\infty$ function $\theta$ which
satisfies this, along with (a)(b).
\end{proof}

\begin{definition}
\label{def-poset}
$\PPP$ is the set of all tuples
$p = (\sigma^p, N^p, g^p_{n+1}, f^p_{n+1},
\psi^p_n, \theta^p_n )_{n < N^p}$,
satisfying the following conditions.  We drop the superscript $p$ when
it is clear from context.
Let $g_0(x) = x^2 / (x^2 + 1)$ and
$f_0(x) = x - \arctan(x)$.
\begin{itemizz}
\item[P1.] $\sigma^p \in
[ D  \times E ]^{<\omega}$ and $(0,0) \in \sigma^p$.
\item[P2.] $\sigma^p$ is an order-preserving bijection.
\item[P3.] For $(d,e) \in \sigma^p$ and $(d,e) \ne (0,0)$:
$\hgt(e) < \hgt(d) < \hgt(e) + \omega$.
\item[P4.]  If $(d_0,e_0),(d_1,e_1) \in \sigma^p$ and $d_0 \ne d_1$ then 
$\hgt(d_0) \ne \hgt(d_1)$.
\item[P5.] $N^p \in \omega$.
\item[P6.] $\hgt(g_n) =  \hgt(f_n) =  \hgt(\psi_n) = \hgt(\theta_n) = 0$.
\item[P7.] $g_n \in C(\RRR, [0,2 - 2^{-n}) )$ 
and $g_n\iv\{0\} = \{0\}$ and $\lim_{x \to \pm \infty} g_n(x) = 1$.
\item[P8.]  $f_n(x) = \int_0^x g_n(t) \, dt$.
\item[P9.] $g_{n+1} = g_n - \psi_n + \theta_n$ when $n < N^p$.
\item[P10.] $\psi_n \in \AP_4$.
\item[P11.] $\theta_n \in C(\RRR)$ and $\| \theta_n \| \le  2^{-n - 1 }$.
\item[P12.] For $(d,e) \in \sigma$:
$d,e > 0 \,\to\,  0 < (e - f_{N^p}(d)) <  2^{-{N^p} - 2} d$ and \\
$d,e < 0 \,\to\,  0 > (e - f_{N^p}(d)) >  2^{-{N^p} -2} d$.
\item[P13.] Whenever
$(d_0, e_0), (d_1, e_1) \in \sigma$ and $d_0 < d_1$:
\[
0
\;<\; \frac{e_1 - e_0}{d_1 - d_0}  -
\frac{f_{N^p}(d_1) - f_{N^p}(d_0)}{d_1 - d_0}  \;<\;  2^{-{N^p} -2} \ \ .
\]
\end{itemizz}
Define $q \le p$ iff
\begin{itemizz}
\item[Q1.] $\sigma^q \supseteq \sigma^p$ and $N^q \ge N^p$.
\item[Q2.] 
$ ( g^p_{n+1}, f^p_{n+1}, \psi^p_n, \theta^p_n )=
 (g^q_{n+1}, f^q_{n+1}, \psi^q_n, \theta^q_n ) $
for all $n < N^p$. 
\item[Q3.] 
Whenever $(0,0) \ne (d,e) \in \sigma^p$ and $N^p < n \le N^q$:
$g^q_n(d) \in (0, 2^{-n})$.
\end{itemizz}
Then $\one = (\{(0,0)\}, 0)$; that is, when $N^p = 0$,
the rest of the tuple is empty.
\end{definition}

We shall now prove a sequence of lemmas leading
up to Theorem \ref{thm-almost-c1}, at the same time
explaining some of the clauses in Definition \ref{def-poset}.

The restriction on heights in (P3)(P4) will be important in the proof of ccc,
and are analogous to the restrictions in Lemma \ref{lemma-compat}.

If $G$ is a  generic filter on $\PPP$,  then in $\V[G]$ we can define
$\widehat \sigma =  \bigcup \{\sigma^p : p \in G\} $.
Then $ \widehat\sigma$ is an order-preserving function from
a subset of $D$ to a subset of $E$, and the $f$ of Theorem \ref{thm-almost-c1}
will extend $\widehat \sigma$ (Lemma  \ref{lemma-extend} below).

We shall apply Lemma \ref{lemma-limit-D} in $\V[G]$ to obtain
$f,g$, and (Q3) will let us prove that $g(d) = 0$ for all
$d \in \dom(\widehat \sigma)$.
Note that (Q3) is vacuous when $N^p = N^q$.

By (P1)(P2)(P7)(P8)(P13), each
$(\sigma , g_N, f_N, 2^{-N-2})$ is correctable.
Then, as noted above, (P12) follows, but we state it
separately for emphasis, since it is used to prove that
$f$ extends $\widehat \sigma$.
Also, the $g_n(x) <  2 - 2^{-n} $  asserted by (P7) follows
by induction from the other assumptions; specifically,
$g_0(x) < 1$, 
$g_{n+1} = g_n - \psi_n + \theta_n$,
$\theta_n(x) \le 2^{-n-1}$, and $\psi_n(x) \ge 0$.

\begin{definition}
\label{def-close}
$\mu(p) =
\min\{ |d_0 - d_1| : d_0, d_1 \in \dom(\sigma_p)
 \ \&\ d_0 \ne d_1\}$.
Call  a map $\zeta$ from  $\PPP$ into the rationals
a $\PPP$--\emph{function} iff
$\zeta(p) \in (0, \mu(p)/2 )$ for all $p$.
For such a $\zeta$, say $p,q \in \PPP$ are $\zeta$--\emph{close} iff
$N^p = N^q$ and $|\sigma^p| = |\sigma^q|$ and
$\zeta(p) = \zeta(q)$ and
all elements of $\dom(\sigma^p) \cup \dom(\sigma^q)$ have different heights and
\[
(g^p_{n+1}, f^p_{n+1}, \psi^p_n, \theta^p_n )_{n < N} =
(g^q_{n+1}, f^q_{n+1}, \psi^q_n, \theta^q_n )_{n < N} \ \ ,
\]
and, setting
$\zeta = \zeta(p) = \zeta(q)$:
For all $d \in \dom(\sigma^p)$
there is a $d' \in \dom(\sigma^q)$ such that $|d - d'| < \zeta$;
furthermore, if $(d,e) \in \sigma^p$ and $(d',e') \in \sigma^q$,
then $d = d'$ implies $e = e'$, and
$d \ne d'$ implies $0 < (e - e')/(d - d') < \zeta$. 
\end{definition}

Note that $p$ is always $\zeta$--close to itself.

The requirement that $\zeta(p) < \mu(p)/2 $ implies that the $d'$ 
above is uniquely determined from $d$.
The actual $\zeta(p)$ used in Lemma \ref{lemma-close-compat}
will be \emph{much} smaller than $\mu(p)/2 $.
The requirement that all the slopes  $(e - e')/(d - d') $
be small but positive will be
fulfilled in the proof of ccc using Lemma \ref{lemma-compat}.

If $p,q$ are $\zeta$--close, then they are ``close'' to being compatible,
with the tuple $ (\sigma^p \cup \sigma^q,  N^p, g^p_{n+1}, f^p_{n+1}, \psi^p_n, \theta^p_n )_{n < N}$
being a common extension, \emph{except} that this may fail (P2)(P12)(P13).

\begin{lemma}
\label{lemma-close-compat}
There is a $\PPP$--function $\zeta$ such that for all $p,q \in \PPP$:
If $p,q$ are $\zeta$--close then $p \not\perp q$ and there
is an $s \in \PPP$ such that $s \le p$ and $s \le q$ 
and $N^s = N^p + 1$.
\end{lemma}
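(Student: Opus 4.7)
The plan is to take the common extension $s$ defined by $\sigma^s := \sigma^p \cup \sigma^q$, $N^s := N + 1$ where $N := N^p = N^q$, and $(\psi^s_n, \theta^s_n, g^s_{n+1}, f^s_{n+1}) := (\psi^p_n, \theta^p_n, g^p_{n+1}, f^p_{n+1})$ for $n < N$ (these agree for $p$ and $q$ by closeness). Writing $g := g^p_N = g^q_N$, what remains is to construct $\psi := \psi^s_N \in \AP_4$ and $\theta := \theta^s_N \in C(\RRR)$, both in $M_1$, with $\|\theta\| \le 2^{-N-1}$, so that $g^s_{N+1} := g - \psi + \theta$ and $f^s_{N+1}(x) := \int_0^x g^s_{N+1}(t)\,dt$ verify (P6)--(P13) and (Q3). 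I will take $\zeta(p)$ to be a positive rational much smaller than $\mu(p)/2$, than $2^{-N-10}$, and than $\min\{g(d) : 0 \ne d \in \dom(\sigma^p)\}$; with this, closeness forces $\sigma^s$ to be order-preserving (matched pairs across $p,q$ have slope in $(0,\zeta)$ by definition of $\zeta$-closeness, while unmatched pairs differ in the $d$-coordinate by more than $\mu(p)/2 - \zeta$), and the distinct-heights clause of closeness yields (P3)(P4).

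For $\psi$, I take a finite sum of $\AP_4$ bumps, one per non-origin $d \in \dom(\sigma^s)$. By Lemma \ref{lemma-psi-4} and the scaling/translation closure of $\AP_4$ noted after Definition \ref{def-AP}, each $\psi_d(x) := \alpha_d(1 + \beta_d|x - c_d|)^{-1/2}$ lies in $\AP_4$ for any rationals $\alpha_d \ge 0$, $\beta_d > 0$, $c_d$. I pick these so that $|c_d - d|$ is far smaller than $\zeta$, so that $\alpha_d \in (g(c_d) - 2\delta, g(c_d) - \delta)$ for a small rational $\delta < \zeta/10$, and so that $\beta_d$ is large but $\beta_d \zeta$ is bounded. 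Then $\psi := \sum_d \psi_d \in \AP_4 \cap M_1$ satisfies $g - \psi > 0$ everywhere, $g(d) - \psi(d) \in (\delta/2, 2\delta)$ at each bump center, $\psi \to 0$ at $\pm\infty$, and $\int_{d_0}^{d_1}\psi$ small on short intervals (since $\int(1 + \beta_d t)^{-1/2}\,dt$ scales like $\beta_d^{-1/2}$). I next build $\theta$ as in Lemma \ref{lemma-correct}: a nonnegative continuous $\theta \in M_1$ with $\|\theta\| \le 2^{-N-1}$, vanishing at each non-origin $d \in \dom(\sigma^s)$, with $\theta(0) = \psi(0)$ to force $g^s_{N+1}(0) = 0$, and satisfying $\int_{d_0}^{d_1}\theta = (e_1 - e_0) - \int_{d_0}^{d_1}(g - \psi) - \eta$ for a chosen positive rational gap $\eta \in (0, 2^{-N-3}(d_1 - d_0))$ between consecutive $d_0 < d_1$ in $\dom(\sigma^s)$ with $e_i := \sigma^s(d_i)$.

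The verification reduces to showing that the integral prescribed for $\theta$ lies in $(0, 2^{-N-1}(d_1 - d_0))$ on each consecutive interval. If $d_0, d_1$ both lie in $\dom(\sigma^p)$ (or both in $\dom(\sigma^q)$), (P13) for $p$ (respectively $q$) gives $(e_1 - e_0) - \int_{d_0}^{d_1}g \in (0, 2^{-N-2}(d_1 - d_0))$, and the bump integral is tiny, so the target is in range. If the pair is matched across $p$ and $q$, closeness gives $(e_1 - e_0)/(d_1 - d_0) \in (0, \zeta)$, while $\int_{d_0}^{d_1}(g - \psi) \le 2\delta(d_1 - d_0)$ because $\psi \approx g$ throughout the interval (both bumps stay near peak value since $d_1 - d_0 < \zeta$ and $\beta_d \zeta$ is bounded), and $\delta < \zeta/10$ keeps the target positive. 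Then (P13) follows by the choice of $\eta$, (Q3) reduces to $g(d) - \psi(d) \in (0, 2^{-N-1})$ which holds by the choice of $\alpha_d$ and the negligibility of other bumps at $d$, and (P6)--(P12) are routine. The main obstacle is this simultaneous balance: $\AP_4$ forbids truly narrow bumps (they would violate $\AV^b_a\psi \le 4\psi(b)$), yet $\psi$ must pull $g$ down to nearly zero at every $d \in \dom(\sigma^s)$ while leaving $g - \psi \ge 0$ globally and having $\int_{d_0}^{d_1}\psi$ small on short intervals, and the slope gap (P13) must fit inside the tiny window $(0, 2^{-N-3})$; the rational parameterization ensures $\psi, \theta \in M_1$, and the inequalities $\delta < \zeta/10$ and $\beta_d\zeta = O(1)$ tune the concentration just finely enough.
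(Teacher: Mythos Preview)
Your construction has a genuine internal inconsistency. You place one bump $\psi_d(x)=\alpha_d(1+\beta_d|x-c_d|)^{-1/2}$ with $\alpha_d\approx g(c_d)\approx g(d)$ for \emph{each} non-origin $d\in\dom(\sigma^s)$, and you assume $\beta_d\zeta$ is bounded so that on a matched interval $[d^p_\ell,d^q_\ell]$ (with $d^p_\ell\ne d^q_\ell$, $|d^p_\ell-d^q_\ell|<\zeta$) ``both bumps stay near peak value''. But then on that interval $\psi\approx\alpha_{d^p_\ell}+\alpha_{d^q_\ell}\approx 2g(d^p_\ell)$, so $g-\psi\approx -g(d^p_\ell)<0$, contradicting your claim that $g-\psi>0$ everywhere and your claim that $g(d)-\psi(d)\in(\delta/2,2\delta)$ at bump centers. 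The same positivity claim also fails at and near $0$: $g(0)=0$ while $\psi(0)>0$ from the bump tails, so $g-\psi<0$ on a whole neighborhood of $0$, not just at the single point you patched with $\theta(0)=\psi(0)$.

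Even if you repaired the bump count (one bump per matched cluster, as the paper does), your positivity argument for the matched-pair target is still wrong: from $\int_{d_0}^{d_1}(g-\psi)\le 2\delta(d_1-d_0)$ and $\delta<\zeta/10$ you cannot conclude the target $(e_1-e_0)-\int(g-\psi)-\eta$ is positive, because the definition of $\zeta$-close only gives slope $\in(0,\zeta)$, which may well be below $2\delta$. The paper's cure for both problems is to abandon the constraint $g-\psi>0$ altogether: it takes a single bump at a rational $\bar d_\ell$ near the cluster with amplitude \emph{exceeding} $g$ there, so $g_N-\psi_N<0$ on the matched interval, and then sets $\theta_N^\dag=\max(0,\psi_N-g_N)+\varepsilon\,x^2/(x^2+1)$, which makes $g_N^\dag=g_N-\psi_N+\theta_N^\dag$ equal to $\varepsilon\,x^2/(x^2+1)$ on that interval. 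This yields $f_N^\dag(d^q_\ell)-f_N^\dag(d^p_\ell)<\varepsilon(d^q_\ell-d^p_\ell)$, and since $\varepsilon$ is chosen \emph{after} seeing $p$ and $q$, it can be taken smaller than the actual slope $(e^q_\ell-e^p_\ell)/(d^q_\ell-d^p_\ell)$. Only after this intermediate $g_N^\dag$ is shown correctable does one add the Lemma~\ref{lemma-correct} term (then rationalized into $M_1$). The overshoot-and-clamp idea is the missing step in your outline.
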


We shall prove this later, after listing some of its consequences.
First, when $p = q$, we get:

\begin{corollary}
\label{cor-bigger-N}
For each $p \in \PPP$, there is an $s \le p$ with $N^s = N^p + 1$.
Hence, $\{q : N^q > i\}$ is dense in $\PPP$ for each $i$.
\end{corollary}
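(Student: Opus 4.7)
The plan is to deduce the corollary directly from Lemma~\ref{lemma-close-compat} by specializing $q = p$. Let $\zeta$ be the $\PPP$--function provided by that lemma. My first step is to verify that $p$ is $\zeta$--close to itself, which is essentially the remark following Definition~\ref{def-close}. Checking the clauses of Definition~\ref{def-close} when $p = q$: the equalities $N^p = N^q$, $|\sigma^p| = |\sigma^q|$, $\zeta(p) = \zeta(q)$, and the agreement of $(g^p_{n+1}, f^p_{n+1}, \psi^p_n, \theta^p_n)_{n < N}$ are trivial; height-distinctness of elements of $\dom(\sigma^p) \cup \dom(\sigma^q) = \dom(\sigma^p)$ holds by (P4); and for each $d \in \dom(\sigma^p)$ the natural matching partner is $d' = d$, so $(d,e) = (d',e')$ makes the ``$d = d'$'' clause of closeness hold immediately and leaves the slope requirement vacuous.

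Having established self-closeness, I apply Lemma~\ref{lemma-close-compat} with $q = p$ to obtain $s \in \PPP$ with $s \le p$ and $N^s = N^p + 1$, which proves the first assertion.

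For the density statement, given any $p \in \PPP$ and any $i \in \omega$, I would iterate: set $s_0 = p$, and having obtained $s_k \le p$ with $N^{s_k} = N^p + k$, apply the first assertion to $s_k$ to pick $s_{k+1} \le s_k$ with $N^{s_{k+1}} = N^{s_k} + 1$. Since $\le$ is transitive (clear from (Q1)(Q2)), $s_{k+1} \le p$ as well. After finitely many steps — as soon as $N^p + k > i$ — we obtain $s_k \le p$ with $N^{s_k} > i$, witnessing density of $\{q : N^q > i\}$ below $p$. There is no real obstacle; all the substantive work has been absorbed into Lemma~\ref{lemma-close-compat}.
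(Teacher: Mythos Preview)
Your proof is correct and follows exactly the paper's approach: the paper derives the corollary by the one-line observation ``when $p = q$'' in Lemma~\ref{lemma-close-compat}, relying on the remark after Definition~\ref{def-close} that $p$ is always $\zeta$--close to itself. You have simply spelled out the verification of self-closeness and the iteration for density, both of which the paper leaves implicit. One very minor point: transitivity of $\le$ also uses (Q3), not just (Q1)(Q2), but this is routine and the paper itself does not bother to verify it.
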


So, in $\V[G]$, we have $g_n, f_n, \psi_n, \theta_n$ for each $n \in \omega$;
e.g., $g_n = g_n^p$ for some (any) $p \in G$ such that $N^p \ge n$.
Then Lemma \ref{lemma-limit-D} applies:  
(1) is obvious,
(2) follows from (P11),
(3) follows from (P10), and (4) follows from (P7)(P9),
So, by Lemma \ref{lemma-limit-D},
$\langle g_n : n \in \omega \rangle$ converges pointwise
to some $g : \RRR \to [0, \infty)$, and $g \in \DD$;
also, $\| g \| \le 2$ by (P7).  Then, since the $g_n$ are uniformly bounded,
$\langle f_n : n \in \omega \rangle$ converges pointwise
to $f$, where $f(x) = \int_0^x g(t) \, dt$.

Regarding (Q3):  By not requiring
$g_n(d) \approx 0$ for \emph{all} $n \le N$,
we make it easier to
add new pairs $(d,e)$ into extensions of $p$
(see the proof of Lemma \ref{lemma-big-ran}).
Likewise, we only require (P12)(P13) for $n = N^p$, so that
when proving Lemma \ref{lemma-big-ran}, we do not need
to consider (P12)(P13) for $n < N^p$.
But still,

\begin{lemma}
\label{lemma-extend}
For $(d,e) \in  \widehat\sigma$: $g(d) = 0$ and $f(d) = e$.
\end{lemma}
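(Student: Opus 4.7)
The plan is to fix a pair $(d,e) \in \widehat\sigma$, choose $p \in G$ with $(d,e) \in \sigma^p$, and exploit the genericity of $G$ together with Corollary~\ref{cor-bigger-N} to produce conditions $q \in G$ extending $p$ with $N^q$ arbitrarily large. Clauses (Q1), (Q3), and (P12) then give precise control over $g^q_n(d)$ and $f^q_{N^q}(d)$, and the desired values $g(d) = 0$ and $f(d) = e$ are read off as limits.

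To verify $g(d) = 0$, first dispose of the case $d = 0$: clause (P7) gives $g_n(0) = 0$ for every $n$, and the pointwise convergence from Lemma~\ref{lemma-limit-D} gives $g(0) = 0$. Now assume $(d,e) \ne (0,0)$. For each $m > N^p$, density (Corollary~\ref{cor-bigger-N}) plus genericity yields $q \in G$ with $q \le p$ and $N^q \ge m$; clause (Q1) keeps $(d,e) \in \sigma^q$ and freezes the $g_k$ for $k \le N^p + 1$, while (Q3) forces $g_m(d) = g^q_m(d) \in (0, 2^{-m})$. Letting $m \to \infty$ yields $g(d) = 0$. For the second equality $f(d) = e$, apply (P12) to each such $q$: since $(d,e) \in \sigma^q$, we have $|e - f_{N^q}(d)| \le 2^{-N^q - 2}\, |d|$. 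On the other hand, $\|g_n\| \le 2$ by (P7), so the dominated convergence theorem (applied on $[0,d]$ or $[d,0]$ to $g_n \to g$) gives $f_n(d) \to \int_0^d g(t)\,dt = f(d)$. Combining, $f(d) = \lim_{N^q \to \infty} f_{N^q}(d) = e$.

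The bulk of the real work is hidden in the surrounding forcing machinery — specifically Lemma~\ref{lemma-close-compat} and its Corollary~\ref{cor-bigger-N}, which ensure that one can always extend any $p \in \PPP$ to some $s \le p$ with $N^s = N^p + 1$ while preserving $\sigma^p$. Given those, Lemma~\ref{lemma-extend} itself amounts to routine bookkeeping; the only subtlety is remembering to split off $(d,e) = (0,0)$ before invoking (Q3), which asserts $g^q_n(d) \in (0, 2^{-n})$ only for $(d,e) \ne (0,0)$ and is vacuous at the origin.
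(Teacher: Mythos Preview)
Your proof is correct and follows essentially the same route as the paper's: pick $p\in G$ with $(d,e)\in\sigma^p$, use Corollary~\ref{cor-bigger-N} and genericity to find $q\le p$ in $G$ with $N^q$ arbitrarily large, then read off the bounds from (Q3) and (P12) and pass to the limit. Your version is in fact slightly more careful than the paper's terse proof --- you explicitly isolate the $(d,e)=(0,0)$ case (handled by (P7)), and you spell out the dominated-convergence step for $f_n(d)\to f(d)$, which the paper had already recorded in the paragraph preceding the lemma. One small slip: it is (Q2), not (Q1), that freezes the functions $g_k$, and the frozen range is $1\le k\le N^p$ rather than $k\le N^p+1$; but you never actually use that remark, so it does not affect the argument.
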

\begin{proof}
Since $\langle g_n : n \in \omega \rangle$
and $\langle f_n : n \in \omega \rangle$
converge pointwise, it is sufficient to show that
some subsequence of  $\langle g_n(d) : n \in \omega \rangle$ converges to $0$
and 
some subsequence of  $\langle f_n(d) : n \in \omega \rangle$ converges to $e$.
Say $(d,e) \in p \in G$. 
Then by Corollary \ref{cor-bigger-N},
$S:= \{N^q : q \in G \, \wedge \, (d,e) \in q\}$ is infinite.
Then, applying (Q3)(P12), 
$\langle g_n(d) : n \in S \rangle$ converges to $0$ and 
$\langle f_n(d) : n \in S \rangle$ converges to $e$.
\end{proof}

Another consequence of Lemma \ref{lemma-close-compat}:

\begin{lemma}
\label{lemma-ccc}
$\PPP$ has the ccc.
\end{lemma}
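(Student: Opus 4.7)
The plan is to show that any uncountable family $\{p^\alpha:\alpha<\omega_1\}\subseteq\PPP$ contains a $\zeta$-close pair in the sense of Definition~\ref{def-close}, where $\zeta$ is the $\PPP$-function from Lemma~\ref{lemma-close-compat}; that lemma then supplies the compatibility. The combinatorial engine is Lemma~\ref{lemma-compat}, so the rest of the work is a sequence of thinnings to put the family into the form that lemma requires.

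I would begin with two rounds of pigeonhole. By (P6) the ``working tuple'' $(g^p_{k+1},f^p_{k+1},\psi^p_k,\theta^p_k)_{k<N^p}$ lies in the countable model $M_1$, and $N^p$, $|\sigma^p|$, $\zeta(p)\in\QQQ$ each take only countably many values; thinning to an uncountable subfamily, I may assume $N^p=N$, $|\sigma^p|=n+1$, $\zeta(p)=\zeta_0$, and the working tuple are all fixed. Next, apply the $\Delta$-system lemma to $\{\sigma^{p^\alpha}\setminus\{(0,0)\}\}_\alpha$ to obtain an uncountable $A$ and a fixed root $R\subseteq D\times E$ for which the non-root parts $T^\alpha:=\sigma^{p^\alpha}\setminus(R\cup\{(0,0)\})$ are pairwise disjoint of common size $n-|R|$. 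Enumerate $T^\alpha=\{(d^\alpha_i,e^\alpha_i):i<n-|R|\}$ in position order. A free-set/pressing-down argument applied to the finite height profiles $\{\hgt(d^\alpha_i),\hgt(e^\alpha_i):i<n-|R|\}$ yields a further uncountable subfamily on which these profiles are pairwise disjoint across $\alpha$; in particular, $d^\alpha_i\ne d^\beta_i$ and $e^\alpha_i\ne e^\beta_i$ whenever $\alpha\ne\beta$, securing hypothesis~(a) of Lemma~\ref{lemma-compat}, while (b) and (c) are immediate from (P3) and (P4). A final thinning using separability of $\RRR^{2(n-|R|)}$ lets me assume $|d^\alpha_i-d^\beta_i|<\zeta_0$ and $|e^\alpha_i-e^\beta_i|<\zeta_0$ for all $\alpha,\beta,i$.

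Applying Lemma~\ref{lemma-compat} with $\varphi(t):=\zeta_0\,t$ produces $\alpha\ne\beta$ with $0<(e^\beta_i-e^\alpha_i)/(d^\beta_i-d^\alpha_i)<\zeta_0$ for every $i$. Combined with the clustering bound and the constancies from the first round of pigeonhole, this verifies each clause of Definition~\ref{def-close}: uniqueness of the correspondence $d\mapsto d'$ follows because $\zeta_0<\mu(p)/2$ keeps distinct elements of any single $\dom(\sigma^{p})$ more than $2\zeta_0$ apart, so $0$ and root elements match themselves while non-root elements match across matched indices. Lemma~\ref{lemma-close-compat} then delivers $p^\alpha\compat p^\beta$, and $\PPP$ is ccc.

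The step I expect to require the most care is the height thinning in combination with the $\Delta$-system step: one must simultaneously honor (P3) and (P4), make the non-root height profiles pairwise disjoint across $\alpha,\beta$, and keep them compatible with the fixed root heights so that Definition~\ref{def-close}'s requirement of distinct heights on $\dom(\sigma^{p^\alpha})\cup\dom(\sigma^{p^\beta})$ is clean. None of these is individually hard, but their joint bookkeeping — and the need to respect the separation $\zeta_0<\mu(p)/2$ when clustering so that the close-pair $d'$ to a given $d$ really is unique — is where the routine combinatorics becomes slightly delicate.
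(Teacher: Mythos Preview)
Your proposal is correct and follows essentially the same route as the paper's proof: pigeonhole on $N$, $|\sigma|$, $\zeta(p)$ and the (P6)-countable working tuple, a $\Delta$-system on the $\sigma^{p^\alpha}$, a height-thinning to secure hypothesis~(a) of Lemma~\ref{lemma-compat} (with (b),(c) coming from (P3),(P4)), and then Lemma~\ref{lemma-compat} with $\varphi(t)=\zeta_0 t$ to obtain a $\zeta$-close pair and invoke Lemma~\ref{lemma-close-compat}. Your identification of the height-thinning as the delicate step is apt; note that the $\Delta$-system on the combined height profiles really does have empty root because (P3) bounds $\hgt(e)<\hgt(d)<\hgt(e)+\omega$, so a putative common height traps uncountably many distinct $d$'s or $e$'s inside a single countable $M_{\xi+\omega}$.
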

\begin{proof}
Let $A \subseteq \PPP$ be uncountable; we prove that $A$ cannot be an
antichain.  Let $\zeta(p)$ be as in Lemma \ref{lemma-close-compat}.
We may assume that $\zeta(p) $ is the same rational
$\zeta$ for all $p \in A$.  Furthermore, by a  delta system argument,
we may assume that $A = \{p^\alpha : \alpha < \omega_1\}$
and $\sigma^{p^\alpha} = \sigma^\alpha \cup \tau$, where $\tau$
is the root of the delta system.
We may also assume (applying (P2)(P3)) that
the $\sigma^\alpha$ satisfy the hypotheses
of Lemma \ref{lemma-compat}, and that all $p^\alpha, p^\beta$
satisfy everything in
Definition \ref{def-close} (of  ``$\zeta$--close'')  except
possibly for the requirement
``$d \ne d'$ implies $0 < (e - e')/(d - d') < \zeta$''.
But now Lemma \ref{lemma-compat} implies that
there is \emph{some} pair $p^\alpha, p^\beta $ with $\alpha \ne \beta$
satisfying this requirement, so that $p^\alpha \not\perp q^\alpha$ by Lemma
\ref{lemma-close-compat}. 
\end{proof}

By applying Lemma \ref{lemma-correct} to $\PPP$  we get:

\begin{lemma}
\label{lemma-correction}
Fix $p \in \PPP$ and a finite $F \subset \RRR$.
Let $N = N^p$ and $\sigma = \sigma^p$.
Then for some $\theta : \RRR \to \RRR$:
\begin{itemizz}
\item[a.]  $\theta(x) \ge 0$ for all $x$, and 
$\|\theta\| < 2^{-N-2}$, and $\theta(x) \to 0$ as $x \to \pm\infty$.
\item[b.] $\theta$ is continuous, and $\theta(d) = 0$ for all
$d \in  F$.
\item[c.]  If $g^* = g_N + \theta$
and $f^*(x) = \int_0^x g^*(t) \, dt$, then $f^*(d) = e$ for
each $(d,e) \in \sigma$.
\end{itemizz}
\end{lemma}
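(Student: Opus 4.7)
The plan is to reduce this lemma directly to Lemma \ref{lemma-correct}. I will simply verify that the quadruple $(\sigma^p, g_N^p, f_N^p, 2^{-N-2})$ satisfies the four conditions (\u P1), (\u P2), (\u P7), (\u P8), (\u P13) of Definition \ref{def-correct} of ``correctable'', and then invoke Lemma \ref{lemma-correct} with $\tau = \sigma^p$, $g = g_N^p$, $f = f_N^p$, $\iota = 2^{-N-2}$, and $J = F$.

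First I would check the correspondences one by one: (\u P1) of correctable is literally (P1) of Definition \ref{def-poset} for $\sigma^p$; (\u P2) of correctable is (P2); (\u P7) of correctable follows from (P7) (noting that the stronger upper bound $g_N^p < 2 - 2^{-N}$ and limit $1$ at $\pm\infty$ in (P7) are irrelevant here, only $g_N^p \in C(\RRR,[0,\infty))$ and $(g_N^p)^{-1}\{0\} = \{0\}$ are needed); (\u P8) of correctable is precisely (P8); and (\u P13) of correctable with $\iota = 2^{-N-2} > 0$ is precisely (P13) with $n = N^p = N$. So $(\sigma^p, g_N^p, f_N^p, 2^{-N-2})$ is correctable, as was already pointed out in the remarks following Definition \ref{def-poset}.

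Now I would apply Lemma \ref{lemma-correct} to this quadruple with the finite set $J := F$. This produces a function $\theta : \RRR \to \RRR$ satisfying conclusions (a), (b), (c) of Lemma \ref{lemma-correct}. Reading those off with $\iota = 2^{-N-2}$: (a) gives $\theta \ge 0$, $\|\theta\| < 2^{-N-2}$, and $\theta(x) \to 0$ as $x \to \pm\infty$, which is exactly our (a); (b) gives $\theta$ continuous and $\theta \equiv 0$ on $J = F$, which is our (b); and (c) gives that with $g^* = g_N^p + \theta$ and $f^*(x) = \int_0^x g^*(t)\,dt$, we have $f^*(d) = e$ for each $(d,e) \in \sigma^p$, which is exactly our (c).

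There is no real obstacle here; the lemma is essentially a translation of Lemma \ref{lemma-correct} into the specific notation of the forcing poset $\PPP$. The only thing to be careful about is matching $\iota$ with $2^{-N-2}$ and checking that the strict inequality $\|\theta\| < 2^{-N-2}$ (rather than $\le$) is what Lemma \ref{lemma-correct} delivers, which it does. No new construction is needed.
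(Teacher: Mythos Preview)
Your proposal is correct and matches the paper's approach exactly: the paper does not give a separate proof of this lemma but simply prefaces it with ``By applying Lemma~\ref{lemma-correct} to $\PPP$ we get:'', having already observed (in the remarks after Definition~\ref{def-poset}) that $(\sigma, g_N, f_N, 2^{-N-2})$ is correctable by (P1)(P2)(P7)(P8)(P13). Your write-up is in fact more detailed than what the paper provides.
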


\begin{lemma}
\label{lemma-big-ran}
$\ran(\widehat \sigma) = E $.
\end{lemma}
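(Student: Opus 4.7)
The plan is to show that $\{q \le p : e^* \in \ran(\sigma^q)\}$ is dense below $p$ for every $p \in \PPP$ and every $e^* \in E$, from which $\ran(\widehat\sigma) = E$ follows by a standard density argument in the generic filter. Fix $p$ and $e^*$ with $e^* \ne 0$ and $e^* \notin \ran(\sigma^p)$. Let $(d_-, d_+)$ be the maximal open interval of reals (using $\pm\infty$ at the boundary of $\ran(\sigma^p)$) whose image under $\sigma^p$ straddles $e^*$. Using that each height slice $\{d \in D : \hgt(d) = \xi\}$ is dense in $\RRR$ and only finitely many heights appear in $\dom(\sigma^p)$, I choose $d^* \in D \cap (d_-, d_+)$ with $\hgt(e^*) < \hgt(d^*) < \hgt(e^*) + \omega$ and $\hgt(d^*) \notin \{\hgt(d) : d \in \dom(\sigma^p)\}$; this ensures (P3) and (P4) for the augmented $\sigma^q := \sigma^p \cup \{(d^*, e^*)\}$. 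Set $N := N^p$ and $N^q := N + 1$.

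The substance is to produce $\psi_N \in M_1 \cap \AP_4$ and $\theta_N \in M_1 \cap C(\RRR)$ with $\|\theta_N\| \le 2^{-N-1}$ so that $g_{N+1} := g_N - \psi_N + \theta_N$ and $f_{N+1}(x) := \int_0^x g_{N+1}(t)\,dt$ satisfy (P6)--(P13) and (Q3) at $n = N+1$. I take $\psi_N$ to be a finite sum of translated, scaled base bumps,
\[
\psi_N(x) = \sum_{j} \alpha_j \bigl(1 + \beta_j |x - r_j|\bigr)^{-1/2},
\]
with positive rational $\alpha_j, \beta_j$ and rational centers $r_j \in M_1$ taken extremely close to each nonzero element of $\dom(\sigma^q)$. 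Each summand lies in $\AP_4$ by Lemmas \ref{lemma-AP} and \ref{lemma-psi-4} together with the affine invariance of $\AP_4$, and $\AP_4$ is closed under finite sums. The parameters are tuned so that (i) $\psi_N < g_N$ pointwise on $\RRR \setminus \{0\}$, with $v := \psi_N(0)$ itself very small (obtained by taking each $\beta_j$ large relative to $\alpha_j$); (ii) $0 < (g_N - \psi_N)(r_j) < 2^{-N-2}$ for each $j$, so the same pair of inequalities holds at every nonzero $d \in \dom(\sigma^q)$ by continuity of $g_N - \psi_N$ and the proximity of $r_j$ to $d$; and (iii) the moments $\int_a^b \psi_N$ between each adjacent pair $a < b$ of $\dom(\sigma^q)$ fall in a narrow positive window making the back-off tuple $(\tilde\tau, g_N - \psi_N, f_N - \Psi_N, 2^{-N-3})$ correctable in the sense of Definition \ref{def-correct}, where $\tilde\tau := \{(d, e - \varepsilon d) : (d, e) \in \sigma^q\}$ for a small positive rational $\varepsilon < 2^{-N-4}$.

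I then apply a mild enhancement of Lemma \ref{lemma-correct} to the tuple from (iii), with $J := \dom(\sigma^q) \setminus \{0\}$ and the extra pointwise requirement $\theta_N(0) = v$ (needed so that $g_{N+1}(0) = 0$ and $g_{N+1} \ge 0$ in a neighborhood of $0$); this extension is routine since the proof of Lemma \ref{lemma-correct} builds $\theta$ as a piecewise $C^\infty$ function freely prescribable at points not subject to interpolation or moment constraints, and $v$ can be made arbitrarily small. The resulting $\theta_N$ is continuous, non-negative, vanishes on $\dom(\sigma^q) \setminus \{0\}$, and has $\|\theta_N\| < 2^{-N-1}$. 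Consequently $g_{N+1}$ is continuous and non-negative with zero set $\{0\}$, bounded above by $2 - 2^{-N-1}$, and tends to $1$ at $\pm\infty$; moreover $f_{N+1}(d) = e - \varepsilon d$ for each $(d, e) \in \sigma^q$, so (P12) holds with bound $2^{-N-3}$ and the constant slope differential $\varepsilon$ gives (P13) with the same bound. Clause (Q3) at $n = N+1$ follows from (ii) together with $\theta_N$ vanishing on $\dom(\sigma^q) \setminus \{0\}$. Since all construction parameters are rational and the operations are absolute, $\psi_N, \theta_N \in M_1$, so (P6) holds; (Q1) and (Q2) are immediate.

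The main obstacle is achieving (iii) simultaneously with (i) and (ii). It is a $(|\dom(\sigma^q)| - 1)$-dimensional system of moment equations on the bump parameters with narrow target windows, subject to amplitude constraints. Slack in the windows comes from two sources: the correctability of $p$, captured in the bound $2^{-N-2}$ of (P13) for $\sigma^p$, supplies room on each adjacent pair already in $\dom(\sigma^p)$; and the freedom to perturb $d^*$ inside $(d_-, d_+)$ before finalizing it supplies room on the two new adjacent pairs incident to $d^*$. A continuity/perturbation argument in the finite-dimensional rational parameter space then yields all required choices of $\alpha_j, \beta_j, r_j, \varepsilon, v$ and fixes $d^*$ at the same step.
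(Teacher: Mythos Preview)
You have taken an unnecessarily hard route and, in doing so, introduced a genuine gap. The paper's proof keeps $N^q = N^p$: given $p$ and $e \notin \ran(\sigma^p)$, it applies Lemma~\ref{lemma-correction} to obtain $f^* = f_{N} + \int_0^{\cdot}\theta$ with $f^* \supset \sigma^p$, sets $\hat d = (f^*)^{-1}(e)$, and observes that for any $d \in D_\xi$ sufficiently close to $\hat d$ (with $\xi$ chosen for (P3)(P4)), the tuple $q$ obtained from $p$ by replacing $\sigma^p$ with $\sigma^p \cup \{(d,e)\}$ and changing \emph{nothing else} lies in $\PPP$ below $p$. Since $N^q = N^p$, clause (Q3) is vacuous and (P5)--(P11) are inherited; only (P2)(P13) need checking, and at $d = \hat d$ the two new gap quantities are $\int_{d_0}^{\hat d}\theta$ and $\int_{\hat d}^{d_1}\theta$, each in $[0,\, 2^{-N-2}(\cdot))$ with strictly positive sum, so a small perturbation of $d$ makes both strictly positive. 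No $\psi_N, \theta_N$ are built at all.

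By contrast you set $N^q = N^p + 1$, which forces you to manufacture $\psi_N, \theta_N$ --- essentially redoing the work of Lemma~\ref{lemma-close-compat} inside a density argument where it is not needed. Worse, your condition (i), ``$\psi_N < g_N$ on $\RRR \setminus \{0\}$'', is impossible: $g_N(0) = 0$ by (P7) while $\psi_N(0) = v > 0$, so by continuity $\psi_N > g_N$ on a punctured neighborhood of $0$. Your patch $\theta_N(0) = v$ gives $g_{N+1}(0) = 0$ but does \emph{not} force $g_{N+1}(x) > 0$ for $x \ne 0$ near $0$: that would require $\theta_N(x) > \psi_N(x) - g_N(x)$ on an entire interval, a constraint that a single pointwise prescription cannot deliver (Lemma~\ref{lemma-correct} gives no such lower bound). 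This is exactly why the paper, in the one place where it \emph{does} step $N$ up (the proof of Lemma~\ref{lemma-close-compat}), takes $\theta_N^\dag(x) = \max(0,\psi_N(x) - g_N(x)) + \varepsilon\, x^2/(x^2+1)$ rather than a corrector vanishing at prescribed points. A secondary issue: your $\theta_N$ is produced from data involving $d^*, e^* \notin M_1$, so the assertion $\theta_N \in M_1$ is not automatic and would need the same post-hoc rational approximation the paper carries out at the end of Lemma~\ref{lemma-close-compat}.
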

\begin{proof}
It is sufficient to prove that for each $e \in E$,
$\{q : e \in \ran(\sigma^q)\}$ is dense.
So fix $p \in \PPP$ with $e \notin \ran(\sigma^p)$, and we
find a $q \le p$ with $e \in \ran(\sigma^q)$;
$q$ will be exactly like $p$,
except that $\sigma^q  = \sigma^p \cup \{(d,e)\}$,
where $d \in D_\xi := \{d \in D : \hgt(d) = \xi\}$
and $\hgt(e) < \xi < \hgt(e) + \omega$
and $\xi$ is different from $\hgt(d')$ for all $d' \in \dom(\sigma^p)$.
Then $q \le p$ is clear, but we must make sure that $q \in \PPP$.

Let $f^*$ be as in Lemma \ref{lemma-correction}.
Then $f^*$ is a continuous increasing function, and,
using the $\lim_{x \to \pm \infty} g_N(x) = 1$ from (P7),
$f^*(x) \to \infty$ as $x \to \infty$ and
$f^*(x) \to -\infty$ as $x \to -\infty$. 
There is thus a unique $\hat d$ such that $f^*(\hat d) = e$.
For all $d$ sufficiently close to $\hat d$, setting
$\sigma^q  = \sigma^p \cup \{(d,e)\}$ will satisfy 
(P2)(P12)(P13), so choose such a $d$ in $D_\xi$,
which is possible because $D_\xi$ is dense.
\end{proof}

Although $\dom(\widehat \sigma) \ne D$ (by  (P3)(P4)), we do have:

\begin{lemma}
\label{lemma-big-dom}
In $\V[G]$, $\dom(\widehat \sigma)$ is an $\aleph_1$--dense subset of $D$.
\end{lemma}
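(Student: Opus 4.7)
Since Lemma \ref{lemma-big-ran} already gives $\ran(\widehat\sigma) = E$ and $\dom(\widehat\sigma) \subseteq D$ is built into (P1), the only content is the $\aleph_1$-density. The plan is to prove a density statement in $\PPP$: for each $p \in \PPP$, each pair of rationals $a < b$, and each $\xi < \omega_1$, there is $q \le p$ with some $d \in \dom(\sigma^q) \cap (a,b)$ of height $\ge \xi$. Genericity will then produce, for each rational interval $(a,b)$, uncountably many elements of $\dom(\widehat\sigma) \cap (a,b)$ (one per $\xi$), giving $\aleph_1$-density.

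The construction mirrors Lemma \ref{lemma-big-ran} with the roles of $D$ and $E$ swapped: I first pick the $D$-coordinate $d$, then solve for a valid $E$-coordinate $e$. Given $p$, first shrink $(a,b)$ to a rational subinterval disjoint from the finite set $\dom(\sigma^p)$; the shrunk interval sits in a unique ``gap'' $(d_0^*, d_1^*)$, whose endpoints are either adjacent elements of $\dom(\sigma^p)$ (with images $e_0^*, e_1^*$) or $\pm\infty$. Next choose an ordinal $\eta \ge \xi$ such that $\eta+1$ is not the height of any element of $\dom(\sigma^p)$ (finitely many exclusions), and pick $d \in D \cap (a,b)$ with $\hgt(d) = \eta + 1$, using the density of $\{d' \in D : \hgt(d') = \eta+1\}$ noted after Definition \ref{def-elem-submod-chain}.

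Let $N = N^p$ and $f_N = f^p_N$. By the adjacent-pair reduction remarked after Definition \ref{def-correct}, the demand that $\sigma^q := \sigma^p \cup \{(d,e)\}$ satisfy (P13) reduces to the two inequalities for the new adjacent pairs $(d_0^*, e_0^*)$--$(d, e)$ and $(d, e)$--$(d_1^*, e_1^*)$ (either omitted if the corresponding endpoint is infinite). Setting $A := e_0^* + (f_N(d) - f_N(d_0^*))$ and $B := e_1^* - (f_N(d_1^*) - f_N(d))$, these inequalities constrain $e$ to the open interval
\[
\bigl(\max\{A,\ B - 2^{-N-2}(d_1^* - d)\},\ \min\{A + 2^{-N-2}(d - d_0^*),\ B\}\bigr).
\]
The crucial observation is that (P13) applied to the pair $(d_0^*, e_0^*), (d_1^*, e_1^*) \in \sigma^p$ yields $0 < B - A < 2^{-N-2}(d_1^* - d_0^*)$; combined with the identity $2^{-N-2}(d - d_0^*) + 2^{-N-2}(d_1^* - d) = 2^{-N-2}(d_1^* - d_0^*)$, a short case check shows the above interval is non-empty. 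Apply density of $\{e' \in E : \hgt(e') = \eta\}$ to pick $e$ in this range. Since $A > e_0^*$ and $B < e_1^*$ (using $f_N$ strictly increasing via (P7)(P8)), we automatically have $e \in (e_0^*, e_1^*)$, so $\sigma^q$ remains an order-preserving bijection.

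Finally, define $q$ by replacing $\sigma^p$ with $\sigma^q$ and keeping all other components of $p$ unchanged. Then $q \in \PPP$: (P1), (P2), and (P5)--(P11) are immediate; (P3) holds since $\hgt(e) = \eta < \eta+1 = \hgt(d) < \eta + \omega$; (P4) follows from the choice of $\eta$; and (P12) and (P13) for $\sigma^q$ follow from the adjacent-pair inequalities just arranged. For $q \le p$, (Q1) and (Q2) are clear, and (Q3) is vacuous because $N^q = N^p$. The one place requiring real care is checking non-emptiness of the allowed range for $e$, which is precisely where the slack permitted by (P13) at $p$ is used.
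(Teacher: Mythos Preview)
Your argument is correct, but it takes a genuinely different route from the paper's. The paper's proof is a one-liner that leverages the already-established lemmas rather than proving a new density statement in $\PPP$: since $f \supset \widehat\sigma$ (Lemma \ref{lemma-extend}) and $f$ is strictly increasing and continuous, and since $\ran(\widehat\sigma) = E$ (Lemma \ref{lemma-big-ran}), one has $\dom(\widehat\sigma) = f^{-1}(E)$; then for any open interval $(a,b)$, $f((a,b))$ is a nondegenerate open interval, and $E$ meets it in $\aleph_1$ many points (ccc preserves $\aleph_1$), so $\dom(\widehat\sigma)$ meets $(a,b)$ in $\aleph_1$ many points.

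Your approach instead proves directly that for each rational interval and each $\xi < \omega_1$ the set of $q$ putting a point of height $\ge \xi$ into that interval is dense in $\PPP$, by choosing $d$ first and then solving for an admissible $e$---the mirror image of Lemma \ref{lemma-big-ran}. The interval computation and the non-emptiness check via $0 < B - A < 2^{-N-2}(d_1^* - d_0^*)$ are correct, the edge cases with an infinite endpoint are indeed easier, and (P3), (P4), and the vacuity of (Q3) are handled as you say. What your approach buys is a self-contained density proof that does not rely on properties of the limit function $f$, and in particular gives explicit control over the heights of points landing in $\dom(\widehat\sigma)$. What the paper's approach buys is brevity: once $f$ and Lemma \ref{lemma-big-ran} are in hand, the result is immediate from the fact that an order-isomorphism of $\RRR$ pulls back $\aleph_1$-dense sets to $\aleph_1$-dense sets.
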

\begin{proof}
Use the facts that $f$ is strictly increasing and continuous,
$\|f'\|  < \infty$
(by $P7$),  $f \supset \widehat\sigma$ (by Lemma \ref{lemma-extend}),
and $\V$, $\V[G]$ have the same $\aleph_1$ (by the ccc).
\end{proof}

We  are now done if we prove  Lemma \ref{lemma-close-compat}.
First, a few remarks.

As  noted above, to prove that $p \compat q$ whenever 
$p,q$ are $\zeta$--close, we need to make sure that the common extension
satisfies (P2)(P12)(P13).  But (P12) is a special case of (P13),
and it is easy to satisfy (P2); that is, if the function $\zeta$ is small enough
then $\sigma^p \cup \sigma^q$ will be order-preserving.
A  more serious issue is that the natural extension,
$(\sigma^p \cup \sigma^q,  N, g_{n+1}, f_{n+1},
\psi_n, \theta_n )_{n < N}$ may fail condition (P13);
that is, $(\sigma^p \cup \sigma^q,  g_N, f_N, 2^{-N-2})$
may not be correctable, since this
puts a \emph{lower} bound on the slopes between adjacent elements of
$\sigma$ in terms of the slope of $f_N$.
But here, the slopes between neighboring pairs
$(d,e)$ and $(d',e')$
are \emph{small} (bounded above by $\zeta$).

The common extension $s$
will have $\sigma^s = \sigma^p \cup \sigma^q$ but $N^s = N + 1$.
Then $\psi^s_N$ will be a linear combinations of functions of the form
$(1 + r|x-\bar d|)^{-1/2}$ for $\bar d$ close to a $d,d'$ pair and suitably 
large $r$.  Also, $r \approx 1/\sqrt{\zeta}$, so
$r \zeta \ll 1$, so that for $x$ near $d,d',\bar d$:
$g_N(x)$ and $\psi^s_N(x)$ will be approximately constant and
$g_N(x) - \psi^s_N(x)$ will be very slightly negative.
But $r$ will be large enough that $\int_0^d \psi(t) \,dt$
will be negligible for each $d$.

Now, we need to
define $\psi_N = \psi^s_N$  and $\theta_N = \theta^s_N$, which
will determine $g_{N+1} = g^s_{N+1}$ and $f_{N+1} = f^s_{N+1}$.
We do not know a ``simple'' definition of $\zeta$ which ``works'',
so rather than defining $\zeta$ right away,
we shall simply define $\psi_N = \psi^s_N$  and $\theta_N = \theta^s_N$,
and check that they have the right properties, assuming that $\zeta$ is small enough.
$\psi_n$ and $\theta_N$ will determine
$g_{N+1} = g^s_{N+1}$ and $f_{N+1} = f^s_{N+1}$
by $g_{N+1} = g_N - \psi_N + \theta_N$ and $f_{N+1}(x) = \int_0^x g_{N+1}(t)\, dt$.
We shall also have $\theta_{N} = \theta_N^\dag + \theta_N^*$
because there are
two tasks for $\theta_{N}$:  to make sure that
$g_{N+1} $ is positive (the task of  $\theta_N^\dag$), and to correct 
$f_{N+1} $ to come close to $\sigma^s$,
so as to satisfy (P13) (the task of $\theta_N^*$).  
Both $\theta_N^\dag$ and  $ \theta_N^*$ will be positive functions.

First, some notation:
Applying the definition of ``close'', 
let $L =   |\sigma^p| = |\sigma^q|$ and let
$\sigma^p = \{(d^p_\ell, e^p_\ell) : \ell < L\}$ and
$\sigma^q = \{(d^q_\ell, e^q_\ell) : \ell < L\}$, where
$| d^p_\ell - d^q_\ell | < \zeta$, which implies also
$| e^p_\ell - e^q_\ell | < \zeta^2$.

Before defining anything, we must make sure that $\sigma^s$ satisfies (P2);
that is, that  $\sigma^p \cup \sigma^q$ is an order-preserving bijection.
In view of the definition of ``close'', the problem is to show
that whenever $d^p_\ell  < d^p_j$  (and hence also $d^q_\ell  < d^q_j$), we have
both $e^p_\ell  < e^q_j$  and $e^q_\ell  < e^p_j$.
Since $| e^p_\ell - e^q_\ell | < \zeta^2$ and $| e^p_j - e^q_j | < \zeta^2$,
it is sufficient that 
$\zeta^2 <  |e^p_j - e^p_\ell | /3$ and $\zeta^2 <  |e^q_j - e^q_\ell | /3$;
but this follows if we assume that
$3 (\zeta(p))^2 < |e - e'|$ whenever $e,e' \in \ran(\sigma^p)$ and $e \ne e'$.

Next, we define $\psi_N$ so that  for each $\ell$, the function $g_N - \psi_N$ 
is slightly negative near $d^p_\ell $ and $ d^q_\ell $.   To make sure that $\psi_N \in M_1$:
Choose rational $\bar d_\ell$ such that 
$| \bar d_\ell - d^p_\ell |, | \bar d_\ell - d^q_\ell | < \zeta$.
Let
$\bar \gamma_\ell =
\max( g_N(  d^p_\ell ), g_N( d^q_\ell ), g_N( \bar d_\ell ) )$.
By (P7), $0 < \bar \gamma_\ell < 2 - 2^{-N}$.  Then choose
a rational $r$ such that $1/\sqrt{\zeta} < r < 2/\sqrt{\zeta}$ and
rational $\gamma_\ell$ so that $\bar \gamma_\ell < \gamma_\ell < 2 - 2^{-N}$
and  $\gamma_\ell - \bar \gamma_\ell <  2^{-N}/256$,
and define:
\[
\psi_N(x) =
\sum_{\ell < L}  \big(\gamma_\ell + 2^{-N}/16\big)
\big(1 + r |x - \bar d_\ell|\big)^{- 1/2} \ \ .
\]
Then $\psi_N \in \AP_4$ by Lemma \ref{lemma-psi-4}.
Clearly, $g_N(\bar d_\ell) - \psi_N(\bar d_\ell) <
\gamma_\ell - (\gamma_\ell +  2^{-N}/16) < 0$,
but we wish to assert also that
$g_N(x) - \psi_N(x) < 0$ whenever $|x - \bar d_\ell| < \zeta$; in particular,
for $x = d^p_\ell , d^q_\ell $.  We may assume that always $\zeta(p) < 1$; then,
for $|x - \bar d_\ell| < \zeta$:
\[
\big(1 + r |x - \bar d_\ell|\big)^{- 1/2} \ge
(1 +  \big(2/\sqrt{\zeta}\big) \cdot \zeta)^{- 1/2}  =
(1 +  2\sqrt{\zeta})^{- 1/2}  > 1 - \sqrt{\zeta} \ \ .
\]
Now, assume that our function $\zeta(p)$ satisfies
$
\forall d \in \dom(\sigma^p) \, \forall x \, [
|x - d | < 2\zeta(p) \to  |g_N(x) - g_N(d)| <  2^{-N}/256] 
$.
Then, using $\bar \gamma_\ell <  \gamma_\ell$:
\[
g_N(x) - \psi_N(x) \le
\big( \gamma_\ell +  2^{-N}/256\big) - 
\big(\gamma_\ell + 2^{-N}/16\big)
\big(1  - \sqrt{\zeta} \big) 
\]
when $|x - \bar d_\ell| < \zeta$, so that
\[
g_N(x) - \psi_N(x) \le
 2^{-N}/256 -  2^{-N}/16 + \big(\gamma_\ell + 2^{-N}/16\big) \sqrt{\zeta}
< 0 \ \ ;
\]
This last $<$ holds if we assume that always $\sqrt{\zeta(p)} <  2^{-N}/256$.

Now, we define
$\theta_N^\dag(x) = \max(0, \psi_N(x) - g_N(x)) + \varepsilon x^2 / (x^2 + 1)$, 
where $\varepsilon $ is a positive rational which is small 
enough to make the following argument work.
Let $g_N^\dag(x) := g_N(x) - \psi_N(x) + \theta_N^\dag(x)$,
which is positive everywhere except  at $0$.
Let $f_N^\dag(x)  = \int_0^x g_N^\dag(t)  \, dt$.
We plan to show that
$(\sigma^p \cup \sigma^q,  g_N^\dag, f_N^\dag, 2^{-N-2})$
is correctable.

(P11) requires $\|\theta_N\| \le 2^{-N-1}$.  
To accomplish this, we first verify that
$\|\theta_N ^\dag\| \le 2^{-N-2}$, and later we shall verify that
$\|\theta_N ^*\| \le 2^{-N-2}$.
 As long as $\varepsilon \le 2^{-N-2}$,
$\theta_N ^\dag(x) \le 2^{-N-2}$ whenever $\psi_N(x) \le g_N(x)$,
which holds as $x \to \pm\infty$ since $g_N(x) \to 1$ and $\psi_N(x) \to 0$.
Also, $\theta_N^\dag(x) \le  \psi_N(x)  + \varepsilon$,
so that if $\varepsilon \le 2^{-N-3}$,
then $\theta_N ^\dag(x) \le 2^{-N-2}$ whenever 
$\psi_N(x) \le 2^{-N-3}$,
and if $\zeta$ is large enough, this will hold unless $x$ is
very close to one of the $\bar d_\ell$.  
More precisely, if $|x - \bar d_\ell | \ge c$ for all $\ell $, then
$\psi_N(x) \le 2L \big(1 + r  c \big)^{- 1/2} < 2L r^{- 1/2}  c^ {- 1/2}  $.
Then, using $1/\sqrt{\zeta} < r < 2/\sqrt{\zeta}$,
if $|x - \bar d_\ell | \ge \zeta^{1/4}$ for all $\ell $ then
\[
\psi_N(x) < 
2L \zeta^{1/4} \zeta^{-1/8} = 2L \zeta^{1/8}  \ \ .
\]
Then  $\theta_N ^\dag(x) \le 2^{-N-2}$ for these $x$ provided we assume that
our $\zeta$ function satisfies
$ 2| \sigma^p| \cdot  (\zeta(p))^{1/8} \le 2^{-N^p-3}$.

Now, fix $x$ and assume that $|x - \bar d_m | \le \zeta^{1/4}$ for some $m$;
this $m$ will be unique if we assume that $(\zeta(p))^{1/4} < \mu(p)/4$
for all $p$.
We need to show that $\theta_N ^\dag(x) \le 2^{-N-2}$.
Assume that $\psi_N(x) > g_N(x)$, since we have already covered the case
that $g_N(x) \le \psi_N(x)$.  So, 
\begin{align*}
& \theta_N^\dag(x) = \psi_N(x) - g_N(x) + \varepsilon x^2 / (x^2 + 1) \le \\
& \qquad \varepsilon + 
\big(\gamma_m + 2^{-N}/16\big)\big(1 + r |x - \bar d_m|\big)^{- 1/2} + \\
& \qquad \sum_{\ell \ne  m}  \big(\gamma_\ell +
 2^{-N}/16\big)\big(1 + r |x - \bar d_\ell|\big)^{- 1/2}  -g_N(x) \le\\
&\varepsilon + \big(\gamma_m + 2^{-N}/16\big) + 2^{-N-4} - g_N(x) =
\varepsilon + (\gamma_m - g_N(x) ) + 2^{-N-2}/2 \ \ ;
\end{align*}
for the last $\le$, use the previous argument, but now assuming that
our $\zeta$ function satisfies
$ 2| \sigma^p| \cdot  (\zeta(p))^{1/8} \le 2^{-N^p-4}$.
Assuming that $\varepsilon  \le  2^{-N -2}/4$ and
$\gamma_m - g_N(x)  \le  2^{-N -2}/4$
we have $\theta_N^\dag(x)  \le  2^{-N-2}$.
Since $\bar \gamma_m \in
\{ g_N(  d^p_m ), g_N( d^q_m ), g_N( \bar d_m ) \}$ and
$\bar \gamma_m < \gamma_m <  \bar \gamma_m + 2^{-N}/256 $,
and $x$ is within $2 \zeta^{1/4}$ of each of $d^p_m , d^q_m , \bar d_m$,
we obtain $\gamma_m - g_N(x)  \le  2^{-N -2}/4$ if we assume that
$\forall x \, \forall d \in \dom(\sigma^p) \,
[  |x - d| \le 2 (\zeta(p))^{1/4} \to
|g_N(x) - g_N(d)| \le 2^{-N -2}/16$.

We next show that
$(\sigma^p,  g_N^\dag, f_N^\dag, 2^{-N-2})$ and
$(\sigma^q,  g_N^\dag, f_N^\dag, 2^{-N-2})$ are correctable. 
To do this, we bound the change
in $f_N(d)$ caused by replacing $g_N$ by $g_N - \psi_N + \theta_N^\dag$;
this change is $\int_0^d (\psi_N(t) - \theta_N^\dag(t)) \, dt$.  Let $\Delta$
be the diameter of $\dom(\sigma^p)$.  Then,
since $\gamma_\ell + 2^{-N}/16 < 2$ and $r > 1/\sqrt{\zeta}$

\begin{align*}
& \int_0^d \psi_N(t)\, dt  <
2L \int_0^\Delta (1 + rt)^{-1/2} \, dt=
\frac{4L}{r} \left[ \sqrt{1 + r\Delta} - 1\right] \le \\
& \frac{4L}{r} \sqrt{r\Delta}  \le
4L \sqrt{\Delta} \sqrt[4]{\zeta} \  .
\end{align*}
This can be made arbitrarily small by requiring the $\zeta$
function to be small enough.
Likewise, $\int_0^d \theta_N^\dag(t)\, dt$
can be made arbitrarily small using
$0 \le \theta_N^\dag(t) \le \psi_N(t) + \varepsilon $.
So, the correctability of
$(\sigma^p,  g_N^\dag, f_N^\dag, 2^{-N-2})$ and
$(\sigma^q,  g_N^\dag, f_N^\dag, 2^{-N-2})$  follows from the correctability of
$(\sigma^p,  g_N, f_N, 2^{-N-2})$ and
$(\sigma^q,  g_N, f_N, 2^{-N-2})$  if $\zeta$ makes
$f_N^\dag$ close enough to $f_n$.

Now, to verify that
$(\sigma^p \cup \sigma^q,  g_N^\dag, f_N^\dag, 2^{-N-2})$
is correctable, we must show that
(\u P13) holds between adjacent elements of
$\dom( \sigma^p \cup \sigma^q )$.  There are two cases
not already covered by the above:

Case I:  Between $d^p_m$ and $d^q_\ell$ where $m \ne \ell$: We need
\[
0
\;<\; \frac{e_m^p - e_\ell^q}{d_m^p - d_\ell^q}  -
\frac{f_N^\dag(d_m^p) - f_N^\dag(d_\ell^q)}{d_m^p - d_\ell^q}  \;<\;
 2^{-N-2} \ \ .
\]
This is handled by making $\zeta$ small enough,
since the inequality holds if we replace
$d^q_\ell, e^q_\ell$ by $d^p_\ell, e^p_\ell$. 

Case II:  Between $d^p_\ell$ and $d^q_\ell$, when
$d^p_\ell \ne d^q_\ell$.
WLOG, $d^p_\ell < d^q_\ell$, and we need
\[
0
\;<\; \frac{e^q_\ell - e^p_\ell}{d^q_\ell - d^p_\ell}  -
\frac{f_N^\dag(d^q_\ell) - f_N^\dag(d^p_\ell)}{d^q_\ell - d^p_\ell}  \;<\;
 2^{-N-2} \ \ .
\]
By the definition of ``close'', we have
$0 < (e^q_\ell - e^p_\ell)/(d^q_\ell - d^p_\ell) < \zeta$,
and our assumptions above about $\zeta$ already imply that
$\zeta < 2^{-N-2}$.  Thus, it is sufficient to have
$f_N^\dag(d^q_\ell) - f_N^\dag(d^p_\ell) < e^q_\ell - e^p_\ell$.
Now we have already checked that $g_N(x) - \psi_N(x) < 0$ 
for $x \in [d^p_\ell , d^q_\ell]$, so that
$g_N^\dag(x) = \varepsilon x^2 / (x^2 + 1)$ for these $x$.
Then $f_N^\dag(d^q_\ell) - f_N^\dag(d^p_\ell) =
\varepsilon \int_{d^p_\ell}^{d^q_\ell} x^2 / (x^2 + 1) \, dx
< \varepsilon(d^q_\ell - d^p_\ell)$,
which will be less than $e^q_\ell - e^p_\ell$ if we have 
chosen a small enough $\varepsilon$.

Then, by Lemma \ref{lemma-correct}, there is a positive
function $\theta^{\#}_N$ such that $\| \theta^{\#}_N \| <  2^{-N-2} $
and, setting $g_{N+1}^\# = g_N^\dag + \theta^{\#}_N$ and integrating,
gives us $f^\#_{N+1} \supset \sigma^s$; so, instead of (P13) for $s$
we have, for $(d_0, e_0), (d_1, e_1) \in \sigma^s$ and $d_0 < d_1$:
\[
(e_1 - e_0) - (f^\#_{N+1}(d_1) - f^\#_{N+1}(d_0)) = 0 \ \ .
\]
This is not exactly what we want, and this $\theta^{\#}_N$ need not be in $M_1$,
but by modifying our  $\theta^{\#}_N$ slightly, we can get
$\theta^*_N \in M_1$ so that
setting $g_{N+1}^s = g_N^\dag + \theta^{*}_N$ and integrating
gives us $f^s_{N+1} $ satisfying
\[
 0 < (e_1 - e_0) - (f^s_{N+1}(d_1) - f^s_{N+1}(d_0)) <
 2^{-N-3}(d_1 - d_0) \ \ ,
\]
which is (P13) for the forcing condition $s$, so that $s \in \PPP$.

Of course, we also need to verify that $s \le p$ and $s \le q$.
(Q1) and (Q2) are trivial, but (Q3) requires 
$g_{N+1}^s(d) \in (0,  2^{-N-1})$
for $d \in \dom(\sigma^p) \cup \dom(\sigma^q) \backslash \{0\}$.
Now $g_{N+1}^s = g_N^\dag + \theta_N^*$, and we already know
that $g_N^\dag(d) = \varepsilon d^2 / (d^2 + 1) < \varepsilon$,
and we already assumed that $\varepsilon  \le  2^{-N -2}$.
So,  when we apply  Lemma \ref{lemma-correct},
get $\theta^{\#}_N(d) = 0$ for these $d$.
Then, when we modify $\theta^{\#}_N$ slightly 
to get $\theta^*_N $, make sure that
$\theta^{\#}_N(d) - \theta^*_N(d) \in (0, 2^{-N-2})$.

\end{document}